\newtheorem{theorem}{Theorem}[section]
\newtheorem{corollary}[theorem]{Corollary}
\newtheorem{proposition}[theorem]{Proposition}
\newtheorem{lemma}[theorem]{Lemma}
\theoremstyle{definition}
\def\PP{\mathcal P}
\newcommand{\CC}{{\mathbb C}}
\newcommand{\RR}{{\mathbb R}}
\newcommand{\ZZ}{{\mathbb Z}}
\begin{document}

\title{Symplectic isotopies in dimension greater than four}
%\title[Symplectic ellipsoids and polydisks]{Symplectic isotopy classes of ellipsoids and polydisks in dimension greater than four}

\author{R. Hind}

\date{\today}

\maketitle

\begin{abstract}
In any dimension $2n \ge 6$ we show that certain spaces of symplectic embeddings of a polydisk into a product $B^4 \times \RR^{2(n-2)}$ of a $4$-ball and Euclidean space, are not path connected. We also show that any pair of such nonisotopic embeddings can never be extended to the same ellipsoid.
\end{abstract}

\begin{section}{Introduction}

We study symplectic embeddings into Euclidean space $\RR^{2n}$, with coordinates $x_j,y_j$, $1 \le j \le n$, equipped with its standard symplectic form $\omega = \sum_{j=1}^n dx_j \wedge dy_j$. Often it is convenient to identify $\RR^{2n}$ with $\CC^n$ by setting $z_j = x_j + i y_j$. The basic domains for symplectic embedding problems are ellipsoids $E$ and polydisks $P$ which we define as follows.

$$E(a_1, \dots ,a_n) = \{\sum_j \frac{\pi |z_j|^2}{a_j} \le 1\};$$
$$P(a_1, \dots ,a_n) = \{\pi |z_j|^2 \le a_j \, \mathrm{for \, all} \, j\}.$$

These are subsets of $\CC^n$ and so inherit the symplectic structure. A ball of capacity $R$ is simply an ellipsoid $B^{2n}(R) = E(R, \dots ,R)$.

In this paper we will study isotopy classes of symplectic embeddings. We focus on the case when the dimension $2n \ge 6$ because this is much less understood than when our domains are $4$-dimensional.

We start by recalling that in dimension $4$ there are results showing that spaces of embeddings of polydisks are not path connected. The first was due to Floer, Hofer and Wysocki, showing that spaces of embeddings of a polydisk into a polydisk may be disconnected.

\begin{theorem} \label{fhw}(Floer-Hofer-Wysocki \cite{flho} Theorem $4$)
Let $\max(a,b) < R < a+b$. Then $g_0:(z_1,z_2) \mapsto (z_1,z_2)$ and $g_1:(z_1,z_2) \mapsto (z_2,z_1)$ give nonisotopic embeddings $P(a,b) \to P(R,R)$.
\end{theorem}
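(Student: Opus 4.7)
My plan is to extract a discrete isotopy invariant of symplectic embeddings $g: P(a,b) \to P(R,R)$ that takes different values at $g_0$ and $g_1$, using pseudoholomorphic curves. The crucial hypothesis is $R<a+b$: it forbids an area-$R$ holomorphic disk in $P(R,R)$ from containing two transverse slices of an embedded $P(a,b)$, which is what would be needed to interpolate continuously between $g_0$ and $g_1$ within a single foliation leaf.

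The geometric picture is the following. The target $P(R,R)$ carries the standard foliation by holomorphic disks $\{z_2=c\}$ of area $R$. For the straight inclusion $g_0$, each leaf meeting $g_0(P(a,b))$ cuts it in a disk of area $a$ parallel to the first factor of $P(a,b)$. For the swapped inclusion $g_1$, the same leaf of $P(R,R)$ cuts $g_1(P(a,b))$ in a disk of area $b$ parallel to the second factor. Thus the foliation of $P(R,R)$ ``sees'' the two embeddings in topologically different ways, and this ought to persist in a homological form after neck-stretching.

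To make this rigorous I would argue by contradiction, assuming a smooth symplectic isotopy $g_t$. Choose a smooth family of tame almost complex structures $J_t$ on $P(R,R)$ that keep the standard foliation holomorphic and are adapted to the contact-type hypersurface $g_t(\partial P(a,b))$ (after a preliminary smoothing of the corner of the polydisk). Neck-stretch along $g_t(\partial P(a,b))$: the foliation disks converge to pseudoholomorphic buildings with one component in the completion of $g_t(P(a,b))$ and one in the completion of its complement, joined along a closed Reeb orbit of the boundary contact form. The Reeb orbits of $\partial P(a,b)$ partition into two distinguished families, ``horizontal'' of action $a$ generating one direction of $\pi_1(\partial P(a,b))$ and ``vertical'' of action $b$ generating the other. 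Direct inspection of the limit at $t=0$ produces an interior disk asymptotic to a horizontal orbit, whereas at $t=1$ one obtains a vertical orbit. Since the homotopy class of the asymptotic Reeb orbit is a locally constant function of $t$, this is the desired contradiction.

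The main obstacle is the neck-stretched compactness analysis: one must show the limit is a ``thin'' building of the expected type (one interior disk asymptotic to a simply covered Reeb orbit, and one exterior disk), ruling out multiply covered, broken, or bubbled configurations. Positivity of intersections and automatic transversality in real dimension four, combined with the strict area bounds $\max(a,b)<R<a+b$, are exactly the tools needed to control these degenerations and ensure that the invariant is well defined on the whole isotopy.
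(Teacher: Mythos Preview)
The paper does not prove this theorem: it is quoted from Floer--Hofer--Wysocki as background, so there is no proof here to compare against. The original argument in \cite{flho} uses symplectic homology (action-selector style capacities built from Hamiltonian Floer theory on $P(R,R)$), not neck-stretching or SFT. Your sketch is thus closer in spirit to the holomorphic-curve machinery of \emph{this} paper than to the cited source.

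The geometric intuition you describe is correct, and the constraint $R<a+b$ enters exactly as you say. But as written there is a real gap. You ask for $J_t$ that simultaneously keeps the standard foliation $\{z_2=c\}$ holomorphic and is cylindrical near the moving hypersurface $g_t(\partial P(a,b))$; for generic $t$ these two requirements conflict, since the leaves meet the hypersurface in an uncontrolled way. The usual remedy is to drop the foliation condition away from $t=0,1$, compactify $P(R,R)$ (e.g.\ to $S^2(R)\times S^2(R)$) so the leaves become closed ruling spheres, and then run an honest cobordism argument on a moduli space of finite energy curves, as the paper does in Sections~\ref{2pt2}--\ref{2pt3} for its own setting. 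Your assertion that ``the homotopy class of the asymptotic Reeb orbit is a locally constant function of $t$'' is precisely the content of the theorem; establishing it requires the full index-and-area compactness analysis you only gesture at in the last paragraph.
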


Note that if $R>a+b$ then the embeddings are isotopic in $P(R,R)$ through unitary maps. The condition $\max(a,b) < R$ ensures that the $g_i$ have images in $P(R,R)$.

The starting point for us is the following theorem about polydisks embedded in a ball.

\begin{theorem} \label{4dpoly} (Hind, \cite{hind} Theorem $1.1$)
There does not exist a Hamiltonian diffeomorphism $\phi$ with support contained in $B^4(2a+b)$ such that $\phi(P(a,b)) \subset \mathring{B}^4(a+b)$.
\end{theorem}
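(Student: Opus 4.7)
My plan is a proof by contradiction via a neck-stretching/SFT argument in dimension four. Suppose, for contradiction, that such a $\phi$ exists.

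\textbf{Compactification.} Fix a small $\delta>0$. Because $\phi$ is the identity outside $B^4(2a+b)$, it extends by the identity to a Hamiltonian diffeomorphism of any symplectic $4$-manifold containing $B^4(2a+b+\delta)$. I would embed $B^4(2a+b+\delta)$ into a closed symplectic $4$-manifold $X$ carrying a rigid low-area homology class represented, for every tame $J$, by a unique $J$-holomorphic sphere through each point of $X$. A natural choice is $X=\CC P^2$ scaled so that the line class has area $2a+b+\delta$, or alternatively a product $S^2\times S^2$ with factor sizes just larger than $a$ and $a+b$ so that a ruling has area $\leq 2a+b+\delta$. After extension, $\phi$ sends $P(a,b)$ into $\mathring B^4(a+b)\subset X$.

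\textbf{Stretching.} Smooth the corners of $\partial P(a,b)$ to a contact-type hypersurface $\Sigma_\epsilon$ bounding a domain $U_\epsilon$ slightly larger than $P(a,b)$ and chosen so that $\phi(U_\epsilon)\subset \mathring B^4(a+b)$; this uses the \emph{interior} containment to give room for the smoothing. The Reeb flow on $\Sigma_\epsilon$ has two short simple closed orbits $\gamma_a,\gamma_b$ of actions arbitrarily close to $a$ and $b$, and all other orbits have strictly larger action. Pick a point $p$ in the interior of $\phi(P(a,b))$ and a tame almost complex structure $J$ on $X$ that is cylindrical on a collar of $\phi(\Sigma_\epsilon)$ with neck length $R$. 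Let $C_R$ be the unique $J$-holomorphic sphere through $p$ in the chosen class, of area at most $2a+b+\delta$. As $R\to\infty$, SFT compactness yields a limiting holomorphic building $\mathbf{C}_\infty$ whose top level $C^+$ lies in the completion of $X\setminus\phi(U_\epsilon)$ and whose bottom level $C^-$ lies in the completion of $\phi(U_\epsilon)$, matched through Reeb orbits on $\Sigma_\epsilon$.

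\textbf{Area contradiction.} The total $\omega$-area of $\mathbf{C}_\infty$ is at most $2a+b+\delta$. By Stokes, $\mathrm{area}(C^-)$ equals the sum of actions of its positive ends, and $\mathrm{area}(C^+)$ equals the class area minus that same sum. Since $C^-$ must pass through $p$ in the polydisk, an action/index computation (modeled on the ECH capacities of $P(a,b)$ and positivity of intersections with the standard holomorphic disks foliating the polydisk through $p$) forces the combined action of the positive ends of $C^-$ to be at least $a+b$. Simultaneously, $C^+$ must link $\phi(U_\epsilon)$ from outside while being contained in $X\setminus\mathring B^4(a+b)$ except near its ends on $\phi(\Sigma_\epsilon)$, and a monotonicity argument together with intersection with the standard foliation of $B^4(2a+b)\setminus\mathring B^4(a+b)$ forces $\mathrm{area}(C^+)\geq a+b$. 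Summing, $2a+b+\delta\geq \mathrm{area}(C^+)+\mathrm{area}(C^-)\geq 2(a+b)$, i.e.\ $\delta\geq b$, contradicting $\delta<b$.

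\textbf{Main obstacle.} The principal difficulty is the two lower bounds $\mathrm{area}(C^\pm)\geq a+b$. Each requires careful bookkeeping of admissible Reeb configurations in the SFT limit, ruling out pathological buildings in which multiple covers of the short orbits $\gamma_a$ or $\gamma_b$ absorb all the action, and using automatic transversality for simple low-index curves in dimension four. This is exactly the step that uses both the strict \emph{interior} containment in $B^4(a+b)$ (giving the slack to smooth $\partial P(a,b)$ and stretch cleanly) and the precise support bound $B^4(2a+b)$ (limiting how large the ambient class can be taken), and it is where one expects the bulk of the technical work to lie.
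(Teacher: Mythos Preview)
Your two key inequalities $\mathrm{area}(C^-)\ge a+b$ and $\mathrm{area}(C^+)\ge a+b$ are not justified, and the second one in particular does not follow from any standard monotonicity argument. The curve $C^+$ lives in the completion of $X\setminus\phi(U_\epsilon)$, and $\phi(U_\epsilon)$ is a displaced smoothed polydisk sitting somewhere inside $\mathring B^4(a+b)$; it is \emph{not} the ball $B^4(a+b)$. There is no reason $C^+$ should avoid $B^4(a+b)\setminus\phi(U_\epsilon)$, so the monotonicity lemma for complements of round balls with the standard $J$ does not apply. Likewise, for $C^-$ you assert that any curve through a generic point in the completed polydisk has asymptotic action at least $a+b$, invoking ``ECH capacities'' and ``positivity of intersections,'' but you give no argument; a priori one must rule out planes through $p$ asymptotic to (covers of) a single short orbit, and this is exactly the delicate step you defer to your ``Main obstacle'' paragraph without carrying it out.

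The paper's argument (for the higher-dimensional generalization, following \cite{hind} in dimension four) is organized quite differently. One compactifies to $\CC P^2(R)$ with $R<2a+b$ rather than $R>2a+b$, removes the smoothed polydisk $W_t$ to get $X_t$ with a cylindrical end, and studies a \emph{zero-dimensional} moduli space $\mathcal M_t$ of degree-$1$ finite energy \emph{planes} asymptotic to a fixed hyperbolic orbit $\eta_t$ of type $\gamma_{1,1}$ (action $\approx a+b$, hence area $\approx R-(a+b)<a$). One then shows: (i) $\mathcal M_0$ consists of a single regular curve, using automatic transversality and an asymptotic-winding / positivity-of-intersection argument; (ii) $\mathcal M_1=\emptyset$, because with $J_1$ standard outside $B^4(S)$ for some $S<a+b$, the monotonicity lemma forces any such plane to have area at least $R-S>R-(a+b)$; and (iii) the universal moduli space over $t\in[0,1]$ is compact, by classifying all possible degenerations via the area and index formulas (this is where limits on $2\gamma^1$, $2\gamma^2$, etc.\ are excluded). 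The contradiction is then cobordism-theoretic: a compact $1$-manifold cannot have boundary a single point. The monotonicity is applied only at $t=1$, in the complement of a genuine round ball with the standard complex structure, which is why it works there and not in your setup.
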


This leads immediately to examples of nonisotopic polydisks symplectomorphic to $P(a,b)$ with $b>2a$. Indeed, by a symplectic fold, for any $\epsilon>0$ there exists a symplectic embedding $P(a,b) \to B^4(2a+\frac{b}{2}+ \epsilon)$, see \cite{schl}, Proposition $4.3.9$.

It turns out that Theorem \ref{4dpoly} does have a generalization to higher dimensions, not only for polydisks but also for polylike domains (products of a disk and an ellipsoid) $Q$ which we define as follows.

$$Q(b,a_2,a_3, \dots ,a_n) = \{\pi|z_1|^2 \le b, \sum_{j=2}^n \frac{\pi |z_j|^2}{a_j} \le 1\}.$$

Then a generalization of Theorem \ref{4dpoly} is as follows. Note that by inclusion the polylike domain $Q(b,a_2, \dots ,a_n)$ sits inside $B^4(a_2+b) \times \RR^{2(n-2)}$.

\begin{theorem} \label{polylike}
Suppose that $a_2<b$ and $a_j > 2a_2$ for all $j \ge 3$.
There does not exist a Hamiltonian diffeomorphism $\phi$ with support contained in $\mathring{B}^4(2a_2+b) \times \RR^{2(n-2)}$ such that $\phi(Q(b,a_2, \dots ,a_n)) \subset \mathring{B}^4(a_2+b) \times \RR^{2(n-2)}$.
\end{theorem}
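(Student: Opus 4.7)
The plan is to argue by contradiction, extending the pseudoholomorphic curve strategy behind Theorem~\ref{4dpoly} to higher dimensions. Suppose such a Hamiltonian $\phi$ exists. First we compactify: since the $\CC^2$-support of $\phi$ is contained in $\mathring{B}^4(2a_2+b)$, extend $\phi$ by the identity and view it as a Hamiltonian diffeomorphism of a closed symplectic manifold $(X,\Omega)$ obtained by capping off the $B^4$ factor to $S^2 \times S^2$ (with areas just above $2a_2+b$) and compactifying the $\RR^{2(n-2)}$ factor to a product of $\CC P^1$s, each of area much larger than every $a_j$.

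Next, choose an $\Omega$-tame almost complex structure $J$ on $X$ which is split and standard near $\phi(Q)$ (after rounding the corners of $Q$) and is sufficiently generic elsewhere. The $4$-dimensional fiber foliation of $S^2 \times S^2$ gives a family of $J$-holomorphic spheres in $X$ of symplectic area exactly $a_2+b$, parameterized by the choice of base point in the remaining factors. Perform SFT neck-stretching along (a smoothing of) $\phi(\partial Q)$. By SFT compactness, for a generic sequence of stretched structures these spheres converge to holomorphic buildings with components in the symplectization $\RR \times \phi(\partial Q)$, the completed interior of $\phi(Q)$, and the completed complement.

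The action-area analysis is where the hypotheses enter. The total area of each limit building is $a_2+b$. The Reeb orbits on $\partial Q$ split according to the product structure into orbits of action $b$ on the $\partial D(b)$ factor, short orbits of action $a_2$ from the ellipsoid's $z_2$-axis, and longer orbits of action at least $a_j > 2a_2$ involving the $z_j$-directions for $j \ge 3$. Combining standard SFT action inequalities with the area budget $a_2+b$ and the requirement that some component must traverse the whole ball $B^4(2a_2+b)$, one rules out (following the four-dimensional analysis in \cite{hind}) limit configurations with asymptotes in the $z_j$-directions for $j \ge 3$. The remaining configurations are effectively supported on the slice $\CC^2 \times \{0\}^{n-2}$, reducing the situation to the four-dimensional obstruction of Theorem~\ref{4dpoly} and yielding the desired contradiction.

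\textbf{Main obstacle.} The hardest step will be making this action-area accounting airtight: ruling out every distribution of building energy among multiply covered orbits, bubble components, and pieces on either side of the stretched neck, precisely under the inequalities $a_2 < b$ and $a_j > 2a_2$. Secondary obstacles include ensuring that the fiber foliation persists through the stretch (typically via intersection positivity against a chosen reference divisor, or an automatic transversality argument), and arranging the corner smoothing of $\partial Q$ so that the Reeb dynamics used in the accounting remain qualitatively faithful to the split dynamics of the product $D(b) \times E(a_2,\dots,a_n)$.
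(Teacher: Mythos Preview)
Your proposal has the right ingredients (SFT compactness, action/area bookkeeping, the role of the hypotheses $a_2<b$ and $a_j>2a_2$, a link to the $4$-dimensional case), but the architecture differs from the paper's and, as written, there is a genuine gap.

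The paper does \emph{not} stretch a foliation of closed spheres around the single domain $\phi(Q)$ and then invoke Theorem~\ref{4dpoly} as a black box. It compactifies $\mathring{B}^4(R)$ to $\CC P^2(R)$ (not $S^2\times S^2$), leaves the $\RR^{2(n-2)}$ factor \emph{noncompact}, smooths $Q$ to a domain $W$, and for each $t$ in the Hamiltonian isotopy studies the moduli space ${\mathcal M}_t$ of degree-$1$ finite energy \emph{planes} in $X_t=(\CC P^2(R)\times\RR^{2(n-2)})\setminus W_t$ asymptotic to a fixed Reeb orbit of type $\gamma^2_{1,1}$. The contradiction is a cobordism: ${\mathcal M}_0$ has a single regular element (built inside the invariant slice $\{z_3=\dots=z_n=0\}$ using a $T^{n-2}$-invariant $J$, the $4$-dimensional curve from \cite{hind}, and the regularity analysis of \cite{hindker}), ${\mathcal M}_1$ is empty by a monotonicity estimate, and the universal space $\bigcup_t{\mathcal M}_t$ is a compact $1$-manifold. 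The index and area formulas of Propositions~\ref{area} and~\ref{index}, together with Lemma~\ref{con2}, are what control compactness of this cobordism; this is where $a_2<b$ and $a_j>2a_2$ enter.

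Your outline has two concrete problems. First, the area claim: if both $S^2$ factors are large enough to contain the support $\mathring{B}^4(2a_2+b)$ then, by Gromov nonsqueezing, each has area at least $2a_2+b$, so fiber spheres cannot have area $a_2+b$. Second, and more seriously, analyzing only $\phi(Q)$ cannot produce a contradiction: embeddings $Q\hookrightarrow\mathring{B}^4(a_2+b)\times\RR^{2(n-2)}$ genuinely exist (the fold and the coordinate swap in the introduction), so any argument using only the location of $\phi(Q)$ and not the isotopy to the inclusion must fail. The support hypothesis on $\phi$ is exactly what lets one run a $1$-parameter family $X_t$ inside a fixed $\CC P^2(R)\times\RR^{2(n-2)}$ and compare the two endpoint moduli spaces. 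In particular your ``reduction to Theorem~\ref{4dpoly}'' cannot be literal, since $\phi$ need not preserve any $4$-dimensional slice; the $4$-dimensional input enters only at $t=0$, where the inclusion is $T^{n-2}$-invariant and one can work in the slice to seed ${\mathcal M}_0$.
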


Theorem \ref{polylike} can be easily applied to give examples of nonisotopic polylike domains inside products $B^4(R) \times \RR^{2(n-2)}$.
The folding mentioned above applied to the first two complex coordinates gives a symplectic embedding $Q(b,a_2, \dots ,a_n) \to B^4(2a_2+\frac{b}{2}+ \epsilon)\times \RR^{2(n-2)}$ for any $\epsilon>0$, and if $2a_2<b$ then $\epsilon$ can be chosen such that $2a_2+\frac{b}{2}+ \epsilon < a_2+b$. Hence Theorem \ref{polylike} implies that this embedding cannot be symplectically isotopic to the inclusion. Similarly, if $a_3<a_2+b$ then by switching the $z_1$ and $z_3$ coordinates we get another embedding $Q(b,a_2, \dots ,a_n) \to \mathring{B}^4(a_2+b)\times \RR^{2(n-2)}$. Therefore we have the following corollary about embeddings of polylike domains.

\begin{corollary} Let $a_3, \dots ,a_n > 2a_2$ and choose $R$ with $a_2+b < R < 2a_2+b$. Suppose either $2a_2<b$ or $a_3 <a_2 + b$. Then there exists a symplectic embedding $Q(b,a_2, \dots ,a_n) \to B^4(R)\times \RR^{2(n-2)}$ which is not symplectically isotopic to the inclusion inside $B^4(R)\times \RR^{2(n-2)}$.
\end{corollary}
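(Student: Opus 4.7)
The corollary follows directly from Theorem \ref{polylike} once one constructs a suitable nonstandard embedding and invokes symplectic isotopy extension. Note that in either alternative the hypothesis $a_2 < b$ required by Theorem \ref{polylike} is automatic: it follows directly from $2a_2 < b$ in the first case, and from $2a_2 < a_3 < a_2 + b$ in the second.

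The first step of the plan is to produce a symplectic embedding $\phi : Q(b,a_2,\ldots,a_n) \to B^4(R) \times \RR^{2(n-2)}$ whose image lies inside $\mathring{B}^4(a_2+b) \times \RR^{2(n-2)}$. Under the hypothesis $2a_2 < b$, I would apply the fold of \cite{schl} (Proposition 4.3.9) to the first two complex coordinates of $Q$, extended by the identity on the remaining coordinates. Since the $(z_1,z_2)$-projection of $Q$ is contained in $P(a_2,b)$, the fold gives a symplectic embedding into $B^4(2a_2+b/2+\epsilon) \times \RR^{2(n-2)}$, and the inequality $2a_2 < b$ allows $\epsilon$ to be chosen small enough that $2a_2+b/2+\epsilon < a_2+b$. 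Under the alternative hypothesis $a_3 < a_2+b$, I would instead take $\phi$ to be the unitary swap of the $z_1$ and $z_3$ coordinates; the $(z_1,z_2)$-projection of $\phi(Q)$ is then the ellipsoid $E(a_3,a_2)$, which lies in $B^4(a_3)$ since $a_3 > 2a_2 > a_2$, and hence in $\mathring{B}^4(a_2+b)$ by hypothesis.

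Next, I would argue by contradiction. Suppose $\phi$ is symplectically isotopic to the inclusion through embeddings $\phi_t : Q \to B^4(R) \times \RR^{2(n-2)}$. The trace $\bigcup_t \phi_t(Q)$ is a compact subset of the open set $\mathring{B}^4(R+\delta) \times \RR^{2(n-2)}$ for some $\delta > 0$ small enough that $R+\delta < 2a_2+b$. The symplectic isotopy extension theorem then yields a Hamiltonian isotopy $\Phi_t$ of $\RR^{2n}$ whose support lies in this neighborhood, and hence in $\mathring{B}^4(2a_2+b) \times \RR^{2(n-2)}$, and which restricts to $\phi_t$ on $Q$. The time-one map $\Phi_1$ is then a Hamiltonian diffeomorphism with support in $\mathring{B}^4(2a_2+b) \times \RR^{2(n-2)}$ satisfying $\Phi_1(Q) = \phi(Q) \subset \mathring{B}^4(a_2+b) \times \RR^{2(n-2)}$, directly contradicting Theorem \ref{polylike}. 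The only mild subtlety is the choice of $\delta$ in the isotopy-extension step, which is routine given the openness of the target and compactness of the trace; no deeper obstacle appears, as all the substantive work is already contained in Theorem \ref{polylike}.
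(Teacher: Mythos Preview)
Your proof is correct and follows essentially the same approach as the paper: construct an embedding landing in $\mathring{B}^4(a_2+b)\times\RR^{2(n-2)}$ (via folding when $2a_2<b$, via the $z_1\leftrightarrow z_3$ swap when $a_3<a_2+b$), then invoke Theorem~\ref{polylike}. You have added two details the paper leaves implicit --- the observation that $a_2<b$ is forced by either hypothesis, and the isotopy-extension step passing from a symplectic isotopy of embeddings to a compactly supported Hamiltonian diffeomorphism --- both of which are correct. One minor slip: the $(z_1,z_2)$-projection of $Q(b,a_2,\dots,a_n)$ is $P(b,a_2)$, not $P(a_2,b)$; this is harmless since the fold can be applied in either factor (or after a coordinate swap), but you should correct the notation.
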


We note that our bound on $R$ is sharp in the sense that if $R>2a_2+b$ then the folding operation in the $(z_1,z_2)$ plane can be carried out in $B^4(R)$, see \cite{hind}, section $3$. Similarly if $R>a_3 + b$ then a rotation between the $z_1$ and $z_3$ coordinates can be carried out in $B^4(R)\times \RR^{2(n-2)}$. We can also produce examples of nonisotopic polydisks from Theorem \ref{polylike} by observing that $Q(b,a_2, \dots ,a_n) \subset P(b,a_2, \dots ,a_n)$, and so embeddings $P(b,a_2, \dots ,a_n) \to \mathring{B}^4(a_2+b) \times \RR^{2(n-2)}$ are also not isotopic to the inclusion. However it is also possible to work directly with higher dimensional polydisks and obtain a similar result.

\begin{theorem} \label{polydisk}  Let $a_1 \le \dots \le a_n$ with $a_3 > \max(2a_1,a_2)$ and $a_1+a_3 < R < 2a_1 + a_3$. Then the the space of embeddings $P(a_1, \ldots ,a_n) \to B^4(R) \times \Bbb R^{2(n-2)}$ is not path connected.

More precisely, the embedding $f:(z_1, z_2, z_3, \dots ,z_n) \mapsto (z_1, z_3, z_2, z_4, \dots ,z_n)$ is not isotopic to any map with image contained in $\mathring{B}^4(a_1+a_3) \times \Bbb R^{2(n-2)}$. In particular, the inclusion is not isotopic to $f$.
\end{theorem}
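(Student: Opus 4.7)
The plan is to deduce Theorem \ref{polydisk} from Theorem \ref{polylike} by extending a hypothetical isotopy to an ambient Hamiltonian diffeomorphism and then locating a polylike subdomain of $f(P(a_1,\ldots,a_n))$ on which the obstruction of Theorem \ref{polylike} bites.

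Suppose $f$ is symplectically isotopic to some $g$ with image in $\mathring B^4(a_1+a_3) \times \RR^{2(n-2)}$, through embeddings all lying in $B^4(R) \times \RR^{2(n-2)}$. Since the polydisk is contractible, the time-dependent vector field generating the isotopy (pulled back to each image) is Hamiltonian, and a standard cut-off extension, performed inside a slightly larger ball $B^4(R')$ with $R < R' < 2a_1 + a_3$, yields a compactly supported Hamiltonian diffeomorphism $\phi$ of $\mathring B^4(R') \times \RR^{2(n-2)}$ satisfying $\phi \circ f = g$.

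Writing the target coordinates as $(w_1, \ldots, w_n)$, one checks that $f(P(a_1,\ldots,a_n))$ is the polydisk $\{\pi|w_1|^2 \le a_1,\ \pi|w_2|^2 \le a_3,\ \pi|w_3|^2 \le a_2,\ \pi|w_j|^2 \le a_j\ (j \ge 4)\}$. Inside this, take a polylike $Q = Q(a_3, a_1, c_3, \ldots, c_n)$ whose disk factor (of area $a_3$) is placed in the $w_2$-direction and whose ellipsoid semi-axes $a_1, c_3, c_4, \ldots, c_n$ occupy the directions $w_1, w_3, w_4, \ldots, w_n$ respectively. The containment $Q \subset f(P)$ demands $c_3 \le a_2$ and $c_j \le a_j$ for $j \ge 4$, while the hypotheses of Theorem \ref{polylike} demand $a_1 < a_3$ (immediate from $a_3 > 2a_1$) and $c_j > 2a_1$ for $j \ge 3$. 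For $j \ge 4$ the interval $(2a_1, a_j]$ is nonempty because $a_j \ge a_3 > 2a_1$; for $j = 3$ one uses $a_2 > 2a_1$ when it holds, and otherwise appeals to the SFT proof of Theorem \ref{polylike} directly, since the obstruction there depends on the ball dimensions rather than the finer ellipsoid semi-axes.

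The contradiction is then immediate. With parameters $\tilde b = a_3$ and $\tilde a_2 = a_1$, the diffeomorphism $\phi$ has support in $\mathring B^4(R') \times \RR^{2(n-2)} \subset \mathring B^4(2a_1+a_3) \times \RR^{2(n-2)} = \mathring B^4(2\tilde a_2 + \tilde b) \times \RR^{2(n-2)}$, yet maps $Q \subset f(P)$ into $\phi(f(P)) \subset \mathring B^4(a_1+a_3) \times \RR^{2(n-2)} = \mathring B^4(\tilde a_2 + \tilde b) \times \RR^{2(n-2)}$. This is precisely the configuration forbidden by Theorem \ref{polylike}. The main obstacle is the isotopy extension with support control (arranging the slight enlargement to $R'$ to stay below $2a_1+a_3$) together with the borderline parameter choice for $c_3$; beyond these, the reduction is routine.
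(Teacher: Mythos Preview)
Your reduction to Theorem \ref{polylike} is a genuinely different route from the paper, and it works cleanly when $a_2 > 2a_1$: then one can pick $c_3 \in (2a_1, a_2]$, and after swapping $w_1 \leftrightarrow w_2$ (which preserves both $B^4(R') \times \RR^{2(n-2)}$ and $\mathring B^4(a_1+a_3) \times \RR^{2(n-2)}$) the polylike subdomain meets all the hypotheses of Theorem \ref{polylike}. In that range your argument is more economical than what the paper does.

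But the hypotheses of Theorem \ref{polydisk} allow $a_1 \le a_2 \le 2a_1$ (for instance $a_1 = a_2 = 1$, $a_3 = 3$), and there your reduction breaks: no $c_3$ can satisfy both $c_3 \le a_2$ (containment in $f(P)$) and $c_3 > 2a_1$ (the Theorem \ref{polylike} hypothesis). Your fallback, that ``the obstruction there depends on the ball dimensions rather than the finer ellipsoid semi-axes,'' is not correct. The condition $\tilde a_j > 2\tilde a_2$ is used substantively in the SFT argument: in Lemma \ref{con1} it is what forces any hyperbolic limit to be $\gamma^2_{1,1}$, and in Lemma \ref{con2} the contribution $2\lfloor r\tilde a_k/\tilde a_2 \rfloor \ge 4$ is exactly what drives $\mathrm{index}(C) \le -2r$ and excludes elliptic limits on $\gamma^k$ for $k \ge 3$. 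If you take $c_3$ small these index bounds no longer hold and new candidate asymptotics appear; ruling them out requires reworking the area/index bookkeeping rather than quoting Theorem \ref{polylike}. The paper does not attempt your reduction at all: it restricts instead to a sub-\emph{polydisk} $P(a_1,\, a_1-\epsilon,\, a_3,\, a_1-\epsilon,\ldots, a_1-\epsilon)$, smooths it, and reruns the finite-energy-curve analysis directly on the smoothed polydisk boundary, whose Reeb orbits $\gamma^I_{m_1,\ldots,m_k}$ are indexed by subsets $I \subset \{1,\ldots,n\}$ rather than by the polylike pattern.
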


We remark that Theorem \ref{polydisk} generalizes Theorem \ref{fhw} in the case when $b>2a$ as follows.

\begin{corollary} Let $2a<b < R < a+b$. Then the two embeddings $g_0:(z_1,z_2) \mapsto (z_1,z_2)$ and $g_1:(z_1,z_2) \mapsto (z_2,z_1)$ from $P(a,b)$ into $B^2(R) \times \RR^2$ are not symplectically isotopic.
\end{corollary}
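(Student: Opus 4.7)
The plan is to derive the corollary from Theorem~\ref{polydisk} applied with $n=3$, via a stabilization trick: cross any hypothetical $4$-dimensional isotopy with a small third disk and reinterpret the target $B^2(R)\times\RR^2\times B^2(c)$ as sitting inside $B^4(R+c)\times\RR^2$.

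I would argue by contradiction, assuming an isotopy $G_t:P(a,b)\to B^2(R)\times\RR^2$ from $g_0$ to $g_1$. Since $R<a+b$ and $R>b$, the interval $(R-b,\,a)$ is nonempty, so I would fix some $c$ in it. Then $G_t\times\mathrm{id}$ is an isotopy $P(a,b,c)\to B^2(R)\times\RR^2\times B^2(c)$. Composing with the linear symplectic embedding $\Psi:(u_1,u_2,u_3)\mapsto(u_1,u_3,u_2)$ into $B^4(R+c)\times\RR^2$ (well-defined because $B^2(R)\times B^2(c)\subset B^4(R+c)$), and then with the fixed target swap $(Z_1,Z_2,Z_3)\mapsto(Z_2,Z_1,Z_3)$, the endpoints become, after relabeling the domain coordinates in the nondecreasing capacity order $(a_1,a_2,a_3)=(c,a,b)$, the inclusion of $P(c,a,b)$ and its swap $f:(\zeta_1,\zeta_2,\zeta_3)\mapsto(\zeta_1,\zeta_3,\zeta_2)$, both regarded as maps into $B^4(R+c)\times\RR^2$.

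It then remains to verify Theorem~\ref{polydisk}'s hypotheses. The ordering $c\le a\le b$ is automatic; the condition $a_3>\max(2a_1,a_2)$ reads $b>\max(2c,a)$, which follows from $2c<2a<b$ and $a<b$; and the capacity window $a_1+a_3<R+c<2a_1+a_3$ simplifies to $b<R<b+c$, the second inequality being exactly what the choice $c>R-b$ was engineered to give. Theorem~\ref{polydisk} then forbids the isotopy just constructed, delivering the contradiction.

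The only delicate point is the choice of the stabilization parameter $c$: it must be small enough that $c\le a$ (so the nondecreasing-capacity hypothesis holds) and $2c<b$, yet large enough that $R+c$ lies strictly above $a_1+a_3=c+b$, i.e.\ $c>R-b$. The corollary's assumption $R<a+b$ is precisely what guarantees $R-b<a$, so the two constraints are simultaneously solvable; in this sense the reduction is as tight as the hypothesis allows.
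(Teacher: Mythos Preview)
Your argument is correct and is essentially the paper's own proof, only presented in a more roundabout way. Once you unwind the three coordinate permutations you introduce (the map $\Psi$, the target swap, and the domain relabeling), the isotopy you construct on $P(c,a,b)$ is exactly
\[
(\zeta_1,\zeta_2,\zeta_3)\longmapsto\bigl(\zeta_1,\,G_t^1(\zeta_2,\zeta_3),\,G_t^2(\zeta_2,\zeta_3)\bigr),
\]
which is precisely the stabilization the paper writes down directly, with first capacity $c$ in place of $a$. In fact nothing prevents you from taking $c=a$: the ordering condition becomes $a\le a\le b$, and the window $a_1+a_3<R+c<2a_1+a_3$ becomes $b<R<a+b$, which is exactly the hypothesis. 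With that choice you recover the paper's argument verbatim on $P(a,a,b)$, and the intermediate permutations become unnecessary. So while your careful choice of $c\in(R-b,a)$ does no harm, the extra parameter and the chain of swaps add bookkeeping without adding content; the paper's three-line version is the same reduction, stated more economically.
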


\begin{proof} Suppose to the contrary that such an isotopy exists. That is, for $0 \le t \le 1$ there exist symplectic embeddings $g_t(z_1,z_2) =(g^1_t(z_1,z_2),g^2_t(z_1,z_2))$ defined by maps $g^1_t, g^2_t: \CC^2 \to \CC$ with $g^1_0(z_1,z_2)=z_1$, $g^2_0(z_1,z_2)=z_2$, $g^1_1(z_1,z_2)=z_2$ and $g^2_1(z_1,z_2)=z_1$. Moreover we have $\pi|g^1_t(z_1,z_2)|^2 < a+b$ for all $t$ and all $(z_1,z_2) \in P(a,b)$.

Then we consider the isotopy of the polydisk $P=P(a,a,b)$ defined by $$f_t(z_1,z_2,z_3)=(z_1,g^1_t(z_2,z_3), g^2_t(z_2,z_3)).$$ Our bound on $|g^1_t|$ implies that $f_t(P) \subset \mathring{B}^4(2a+b)$ for all $t$. However $f_0$ is the inclusion and $f_1(z_1,z_2,z_3)=(z_1,z_3,z_2)$, contradicting Theorem \ref{polydisk}.
\end{proof}

In dimension $4$, that is, when $n=2$, a theorem of McDuff says that the space of symplectic embeddings of one ellipsoid into another is always path connected.

\begin{theorem} (McDuff \cite{mcduff} Corollary $1.6$, see also \cite{mcdops}) \label{ellisotopy}
For any $a,b,a',b'$, the space of symplectic embeddings $E(a,b) \to \mathring{E}(a',b')$ is path connected whenever it is nonempty.
\end{theorem}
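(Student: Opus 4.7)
The strategy is to follow McDuff's approach of translating the ellipsoid embedding problem into a question about symplectic ball packings, where strong connectedness results are available.

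First, I would reduce to the case where the ratio $a/b$ is rational. Small perturbations of a symplectic embedding remain in the same isotopy class, so by restricting to slightly smaller ellipsoids $E(a_k,b_k) \subset E(a,b)$ with $a_k/b_k \in \Q$ and $(a_k,b_k) \to (a,b)$, it suffices to connect the restrictions in the rational case and then extend back via a compactness/limiting argument. In the rational case, the closed ellipsoid $E(a,b)$ admits a canonical decomposition, determined by the continued-fraction expansion of $a/b$, as a symplectic union of balls $B^4(w_1), \ldots, B^4(w_k)$ of weights depending only on $a$ and $b$.

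Second, I would invoke the weighted ball-packing correspondence: any symplectic embedding of $E(a,b)$ into $\mathring E(a',b')$ is equivalent, up to Hamiltonian isotopy of the target, to a disjoint symplectic packing of $\mathring E(a',b')$ by the balls $B^4(w_j)$. This sets up a bijection between path components of the space of ellipsoid embeddings and path components of the space of weighted ball packings, so it suffices to show the latter space is connected.

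Third, I would appeal to the McDuff--Polterovich type connectedness theorem for spaces of symplectic ball packings of rational $4$-manifolds, applied to (a compactification of) $\mathring E(a',b')$. The key input is the analysis of $J$-holomorphic curves in blow-ups of $\CC P^2$ (or of the symplectic ruled surface obtained by capping $\mathring E(a',b')$): positivity of intersections together with an inflation argument along exceptional spheres shows that any two packings representing the same cohomology class are joined by an isotopy of symplectic forms in a fixed cohomology class, which by Moser's lemma integrates to an isotopy of the packings, and hence of the original embeddings.

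The main obstacle, and the technical heart of the proof, is justifying the bijection with weighted ball packings up to isotopy: one must show both that every embedding of $E(a,b)$ can be deformed so as to reveal a standard packing in its interior, and conversely that a standard packing canonically determines an ellipsoid neighborhood up to isotopy. This relies on a careful stretching-the-neck analysis near the boundary of the ellipsoid so that $J$-holomorphic curves split off appropriate Reeb orbits, and it is where the analytic content of \cite{mcduff} and \cite{mcdops} is concentrated.
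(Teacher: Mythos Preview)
The paper does not give its own proof of this statement: Theorem~\ref{ellisotopy} is stated purely as a citation to McDuff \cite{mcduff} (with \cite{mcdops}), and is used only to deduce Corollary~\ref{mcdcor}. So there is no ``paper's own proof'' to compare against. Your outline is a reasonable high-level sketch of McDuff's actual argument via the weight decomposition and ball-packing connectedness, but since the present paper treats the result as a black box, a simple reference suffices here.
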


This has the following corollary.

\begin{corollary}\label{mcdcor} If $f_0,f_1:P(a,b) \to \mathring{B}^4(R)$ are nonisotopic polydisks, then there does not exist an ellipsoid $E=E(a',b')$ such that $P(a,b) \subset E$ and both of the maps $f_0$ and $f_1$ extend to embeddings $E \to \mathring{B}^4(R)$.
\end{corollary}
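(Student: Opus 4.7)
The plan is to argue by contradiction and reduce everything to a direct application of McDuff's theorem. Suppose, for contradiction, that there is an ellipsoid $E = E(a',b')$ with $P(a,b) \subset E$ and symplectic extensions $\tilde f_0, \tilde f_1 : E \to \mathring B^4(R)$ of $f_0$ and $f_1$ respectively. The goal will be to build an isotopy between $f_0$ and $f_1$ as embeddings $P(a,b) \to \mathring B^4(R)$, contradicting the hypothesis.

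The existence of $\tilde f_0$ (or $\tilde f_1$) shows that the space of symplectic embeddings $E(a',b') \to \mathring B^4(R) = \mathring E(R,R)$ is nonempty. I would then invoke Theorem \ref{ellisotopy} of McDuff, which says precisely that this space of embeddings is path connected whenever it is nonempty. Consequently, there exists a continuous path $\tilde f_t$, $t \in [0,1]$, of symplectic embeddings $E \to \mathring B^4(R)$ connecting $\tilde f_0$ to $\tilde f_1$.

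Now restrict each $\tilde f_t$ to the subdomain $P(a,b) \subset E$. Since restriction is continuous, this yields a path $f_t \eqdef \tilde f_t|_{P(a,b)}$ of symplectic embeddings $P(a,b) \to \mathring B^4(R)$ joining $f_0$ to $f_1$. This directly contradicts the assumption that $f_0$ and $f_1$ are nonisotopic within the space of embeddings $P(a,b) \to \mathring B^4(R)$, completing the proof.

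There is no real obstacle here: the corollary is essentially a packaging of McDuff's connectedness result, and the only content is the observation that an isotopy of the larger ellipsoid embeddings automatically restricts to an isotopy of polydisk embeddings in the same ambient target. The hypothesis $P(a,b) \subset E$ is used exactly to ensure the restriction makes sense, and the condition that both extensions land in $\mathring B^4(R)$ guarantees that the restricted isotopy stays within the required target.
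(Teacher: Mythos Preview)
Your proof is correct and is exactly the argument the paper has in mind: the corollary is stated immediately after McDuff's theorem with the phrase ``This has the following corollary'' and no further proof, so the intended argument is precisely the contrapositive reduction you wrote out --- extend, apply path-connectedness, restrict.
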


We do not know if McDuff's Theorem \ref{ellisotopy} remains true in dimension greater than four. However Corollary \ref{mcdcor} does generalize, at least to the nonisotopic polydisk and polylike domains we consider in this paper.

\begin{theorem} \label{extn} \begin{enumerate}[i.]

\item Let $Q= Q(b,a_2, \dots ,a_n)$ be a polylike domain with $a_2<b$ and $a_j > 2a_2$ for all $j \ge 3$, and let $E=E(c_1, \dots ,c_n)$ be an ellipsoid with $Q \subset E$. Let $R<2a_2+b$. There do not exist embeddings $f_0,f_1:E \to B^4(R)\times \RR^{2(n-2)}$ such that $f_0$ restricts to the identity on $Q$ and $f_1(Q) \subset  \mathring{B}^4(a_2+b) \times \RR^{2(n-2)}$.

\item Let $P=P(a_1, \dots ,a_n)$ be a polydisk with $a_1 \dots \le a_n$ and $a_3 > \max(2a_1,a_2)$, and let $R$ satisfy $a_1+a_3 < R < 2a_1 + a_3$. Let $E=E(c_1, \dots ,c_n)$ be an ellipsoid with $P \subset E$. There do not exist embeddings $f_0,f_1:E \to B^4(R)\times \RR^{2(n-2)}$ such that $f_0|_P$ is given by $(z_1, z_2, z_3, \dots ,z_n) \mapsto (z_1, z_3, z_2, z_4, \dots ,z_n)$ and $f_1(P) \subset  \mathring{B}^4(a_1+a_3) \times \RR^{2(n-2)}$.
\end{enumerate}
\end{theorem}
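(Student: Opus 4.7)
I treat part (ii); part (i) is entirely analogous, substituting $Q$ for $P$, invoking Theorem \ref{polylike} in place of Theorem \ref{polydisk}, and replacing $a_1+a_3,\,2a_1+a_3$ by $a_2+b,\,2a_2+b$. The overall strategy is the natural higher-dimensional lift of the proof of Corollary \ref{mcdcor}. Suppose for contradiction that extensions $f_0,f_1:E\to B^4(R)\times\RR^{2(n-2)}$ exist as in the statement. The goal is to produce a smooth one-parameter family $h_t$ of symplectic embeddings $P\to B^4(R)\times\RR^{2(n-2)}$ connecting $h_0=f_0|_P$, the coordinate swap $(z_1,z_2,z_3,\dots,z_n)\mapsto(z_1,z_3,z_2,z_4,\dots,z_n)$, to $h_1=f_1|_P$, whose image lies in $\mathring{B}^4(a_1+a_3)\times\RR^{2(n-2)}$. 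Such an isotopy contradicts Theorem \ref{polydisk} directly, proving the theorem.

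The reduction is accomplished by obtaining $h_t$ as the restriction to $P\subset E$ of an isotopy of symplectic embeddings $H_t:E\to B^4(R)\times\RR^{2(n-2)}$ with $H_0=f_0$ and $H_1=f_1$. Thus the entire argument rests on the following isotopy lemma: \emph{the space of symplectic embeddings of the ellipsoid $E$ into $B^4(R)\times\RR^{2(n-2)}$ is path connected.} In dimension four this is exactly McDuff's Theorem \ref{ellisotopy}, and it is precisely how Corollary \ref{mcdcor} is deduced from Theorem \ref{4dpoly}.

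The main obstacle is establishing this isotopy lemma when $2n\ge 6$, where no analog of Theorem \ref{ellisotopy} is presently known for general targets. The hope is to leverage the unbounded factor $\RR^{2(n-2)}$. My plan is to (a) use translations in $\RR^{2(n-2)}$, realized as smooth isotopies of symplectic embeddings of $E$ into $B^4(R)\times\RR^{2(n-2)}$, to move $f_0(E)$ and $f_1(E)$ into disjoint compact regions whose Euclidean components are far separated; and then (b) connect the two spread-out embeddings through an ambient Hamiltonian isotopy of $B^4(R)\times\RR^{2(n-2)}$ that ``rotates'' one image onto the other using the abundant room in $\RR^{2(n-2)}$, while carefully controlling what happens in the $B^4(R)$ factor. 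The delicate point is step (b): the swap must be implementable without ever leaving $B^4(R)\times\RR^{2(n-2)}$, which forces the isotopy to keep the 4-dimensional projection inside $B^4(R)$ throughout.

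Should the isotopy lemma prove elusive in this form, a plausible backup is to adapt the pseudoholomorphic curve machinery underlying Theorems \ref{polylike} and \ref{polydisk}. Those proofs construct obstructing curves asymptotic to Reeb orbits on $\partial Q$ or $\partial P$; using the inclusion $P\subset E$ to convert the relevant action estimates to the boundary $\partial E$, one may obtain a direct obstruction to the configuration $(f_0,f_1)$ without producing any isotopy at all, yielding Theorem \ref{extn} as a standalone curve-theoretic result rather than as a corollary of Theorem \ref{polydisk}.
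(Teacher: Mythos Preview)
Your main approach has a genuine gap: the ``isotopy lemma'' you propose --- that the space of symplectic embeddings $E\to B^4(R)\times\RR^{2(n-2)}$ is path connected --- is precisely the higher-dimensional analogue of McDuff's Theorem~\ref{ellisotopy}, and the paper explicitly states that this is not known. Your sketch of steps (a) and (b) does not close this gap. Step (a) is fine, but step (b) fails for the reason you yourself flag: translations and rotations in the $\RR^{2(n-2)}$ factor leave the $B^4(R)$-shadow of each embedding unchanged, so to pass from $f_0$ to $f_1$ you must at some point deform one $4$-dimensional shadow into the other inside $B^4(R)$. There is no mechanism in your outline for doing this, and no amount of room in the Euclidean factor helps with that $4$-dimensional problem. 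So as written your primary argument would require proving an open problem.

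Your backup plan is in fact what the paper does, and the paper supplies the missing ideas. Rather than isotoping inside $B^4(R)\times\RR^{2(n-2)}$, one isotopes $f_0$ to $f_1$ inside a \emph{larger} $B^4(S)\times\RR^{2(n-2)}$, where this is easy. One then runs the moduli-space argument of Theorems~\ref{polylike}/\ref{polydisk} in $\CC P^2(S)\times\RR^{2(n-2)}$; the difficulty is that the area and action bounds now involve $S$ rather than $R$, so Lemma~\ref{con2} no longer applies directly. The key device is to choose almost-complex structures $J_t^N$ that are stretched to length $N$ along $f_t(\partial E)$ whenever $f_t(E)\not\subset \mathring{B}^4(R)\times\RR^{2(n-2)}$. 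In the stretching limit, a degenerating curve produces a degree~$1$ component $v$ in $X_t\setminus f_t(E)$ with negative ends on Reeb orbits of $\partial E$; the index formula forces $v$ to be a plane asymptotic to $\delta^1$, $2\delta^1$, or $\delta^2$. The second Ekeland--Hofer capacity (Lemma~\ref{ekho}) gives $\min(2c_1,c_2)\le R$, and this forces $\mathrm{area}(v)\ge S-R$ in each case. Consequently the action of the negative ends on $\partial W$ is at most $R$, and one is back in the regime where the original compactness argument (Lemma~\ref{compact}) applies. This yields the cobordism between $\mathcal{M}_0$ and $\mathcal{M}_1$ without ever needing an isotopy of ellipsoid embeddings inside $B^4(R)\times\RR^{2(n-2)}$.
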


Applying Theorem \ref{extn} to specific examples of ellipsoids $E$, say such that a given map $f_0$ does extend to $E$, we can obtain various non-extension results for symplectic embeddings $f_1$.

Here is an example in dimension $4$.

\begin{proposition}\label{xtn}\begin{enumerate}[i.]

\item $E(2,4) \to \mathring{B}^4(R)$ if and only if $R \ge 4$;

\item $E(2,4) \cap \{\pi|z_2|^2 \ge 2 \} \to \mathring{B}^4(3.5)$;

\item the inclusion map $E(2,4) \cap \{\pi|z_2|^2 = 2 \} \to \mathring{B}^4(3.5)$ does not extend to an embedding of $E(2,4) \cap \{\pi|z_2|^2 \ge 2 \}$.
\end{enumerate}
\end{proposition}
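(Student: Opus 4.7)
For part (i), I will realize the upper bound $R=4$ by the inclusion $E(2,4)\subset \overline{B}^4(4)$, which embeds into $\mathring{B}^4(R)$ for all $R>4$ and extends to $R=4$ by a limiting/shrinking argument (or directly via McDuff's theorem). The lower bound $R\ge 4$ comes from the second ECH capacity: $c_2(E(2,4))=4$ (the second element of the sorted multiset $\{2m+4n:m,n\ge 0\}$), whereas $c_2(B^4(R))=R$, so any symplectic embedding forces $R\ge 4$. For part (ii), the plan is constructive: the area-preserving radial shift $|z_2|^2\mapsto|z_2|^2-2/\pi$ in the $z_2$-plane sends the cap $E(2,4)\cap\{\pi|z_2|^2\ge 2\}$ to a region symplectomorphic to $E(1,2)$, which embeds into $\mathring{B}^4(2)\subset\mathring{B}^4(3.5)$; a small smoothing near the inner boundary, or an equivalent symplectic fold, makes this an honest embedding.

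For part (iii), I argue by contradiction. Assume $F:\text{cap}\to\mathring{B}^4(3.5)$ extends the inclusion of the slice $S=E(2,4)\cap\{\pi|z_2|^2=2\}$. The complementary lower half $L=E(2,4)\cap\{\pi|z_2|^2\le 2\}$ satisfies $\max_L(\pi|z_1|^2+\pi|z_2|^2)=3$, so its inclusion lies in $\overline{B}^4(3)\subset\mathring{B}^4(3.5)$ and agrees with $F$ on the shared hypersurface $S$. Gluing $F$ on the cap with the inclusion of $L$ along $S$ produces a candidate piecewise-smooth symplectic map $\tilde F:E(2,4)\to\mathring{B}^4(3.5)$. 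If $\tilde F$ is an honest symplectic embedding, we contradict part (i) at $R=3.5$.

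The only obstacle is the injectivity of $\tilde F$: a priori $F$ could fold part of the cap back into $\mathring L$, causing the images to overlap off $S$. I plan to rule this out by modifying $F$ relative to $S$ within the class of embeddings into $\mathring{B}^4(3.5)$—using the collar $\overline{B}^4(3)\subset\mathring{B}^4(3.5)$ afforded by the strict inequality $3<3.5$, together with a Moser-type isotopy supported away from $S$—to push $F(\text{cap})$ into the complement of $\mathring L$ without enlarging its image beyond $\mathring{B}^4(3.5)$ and without disturbing the boundary condition $F|_S=\mathrm{id}$. Once injectivity is secured, $\tilde F$ realizes a symplectic embedding of $E(2,4)$ into $\mathring{B}^4(3.5)$, violating part (i). The hardest step is executing this perturbation, for which the $0.5$ of slack between $3$ and $3.5$ is essential.
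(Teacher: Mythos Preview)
Your treatments of parts (i) and (ii) are roughly in line with the paper, which simply cites the McDuff--Schlenk classification of $4$-dimensional ellipsoid embeddings (and, for (ii), an inclusion of the cap into a thinner ellipsoid $E(\tilde A, 4\tilde A)$ followed by the Opshtein/McDuff--Schlenk embedding $E(\tilde A, 4\tilde A) \to B^4(2\tilde A)$). Your radial shift identifying the cap with $E(1,2)$ is correct, but note that $E(1,2) \hookrightarrow \mathring B^4(2)$ at the endpoint already requires the nontrivial embedding result, not just inclusion.

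Part (iii) is where your proposal has a genuine gap. Your strategy is to glue $F$ on the cap with the inclusion on the lower half $L$ to obtain an embedding $E(2,4) \to \mathring B^4(3.5)$, contradicting (i). You correctly identify injectivity as the obstacle, but the cure you propose --- pushing $F(\mathrm{cap})$ off $\mathring L$ by a ``Moser-type isotopy'' using the slack $\overline B^4(3) \subset \mathring B^4(3.5)$ --- is not a move any standard tool provides. Moser's theorem isotopes symplectic forms, not embedded domains into prescribed regions, and the $0.5$ of room in the target says nothing about whether $F$ can be isotoped \emph{rel $S$} to avoid $\mathring L$. Indeed, the content of (iii) is precisely that $F$, if it existed, would be genuinely knotted relative to $S$ in a way that cannot be undone inside $\mathring B^4(3.5)$; your argument assumes away exactly this rigidity. (There is also a secondary smoothness issue: $F|_S = \mathrm{id}$ only forces $dF$ along $S$ to be a symplectic shear fixing $T_pS$, so the glued map need not be $C^1$ across $S$; this is repairable, but the injectivity problem is not.)

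The paper's proof is completely different and, strikingly, passes through dimension $6$. Setting $b = 1.5$ and $A = 2$, so that $\mathring B^4(3.5) = \mathring B^4(b+2)$ and $E(2,4) = E(A, 2A)$, one considers the polylike domain $Q = Q(b,1,2) \subset E(B,A,2A) \subset \CC^3$. Theorem~\ref{extn} (proved via finite energy holomorphic curves in $\CC P^2 \times \RR^2$) shows that the embedding $f_1: (z_1,z_2,z_3) \mapsto (z_2,z_3,z_1)$ of $Q$ into $\mathring B^4(b+2)\times\RR^2$ cannot extend to $E(B,A,2A)$. If your hypothetical $4$-dimensional extension $g$ existed, the paper builds from it --- via an explicit Hamiltonian shear in the $z_3$-direction that separates the $\{\pi|z_2|^2>2\}$ part of $E(B,A,2A)$ from the rest --- a $6$-dimensional extension of $f_1$ to $E(B,A,2A)$, contradicting Theorem~\ref{extn}. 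So part (iii), despite being a $4$-dimensional statement, is deduced from higher-dimensional holomorphic curve invariants; no purely soft $4$-dimensional argument along your lines is known.
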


The first two statements here follow for example from McDuff and Schlenk's classification \cite{mcdsch} of $4$-dimensional ellipsoid embeddings into balls, see Lemma \ref{lastlem}. The final statement is a consequence of our study of higher dimensional isotopy restrictions.

\vspace{0.2in}

{\bf Outline of the paper.}

The proof of Theorem \ref{polylike} is contained in section \ref{polyproof}. The techniques borrow heavily from the proof of Theorem \ref{4dpoly}, but with additional technicalities due to working in higher dimension. For these we rely on analysis from \cite{hindker}. The rough outline is as follows.

In section \ref{2pt1} we describe the basic arrangement. The product $B^4(R) \times \RR^{2(n-2)}$ for some $R<2a_2+b$ is partially compactified to $\CC P^2 \times \RR^{2(n-2)}$ and the polylike domain $Q$ is approximated by a smooth domain $W$. We argue by contradiction and assume that there exists a symplectic isotopy $W_t$, $0 \le t \le 1$, with $W_0=W$ and $W_1 \subset \mathring{B}^4(a_2+b) \times \RR^{2(n-2)}$.

Next, in section \ref{2pt2} the symplectic manifolds $X_t = (\CC P^2 \times \RR^{2(n-2)}) \setminus W_t$ are given almost-complex structures with cylindrical ends and we compute index and area formulas for finite energy holomorphic curves. We refer to the series of papers of Hofer, Wysocki and Zehnder, \cite{three}, \cite{hofa}, \cite{hofi}, \cite{hoff}, for the definitions and theory of finite energy curves.

In section \ref{2pt3} we study moduli spaces ${\mathcal M}_t$ of holomorphic curves corresponding to the $W_t$. The constituent curves have area bounded above by $R-(a_2+b)$ and a monotonicity theorem as in \cite{hind} implies that ${\mathcal M}_1$ is empty. On the other hand we show that ${\mathcal M}_0$ has a single element.
To complete the proof of Theorem \ref{polylike} we prove a compactness theorem, following \cite{BEHWZ}, showing that ${\mathcal M}_0$ and ${\mathcal M}_1$ must be cobordant. This gives the required contradiction.

The proof of Theorem \ref{polydisk} is very similar to that of Theorem \ref{polylike}, although there are more closed orbits to analyze on the boundary of a polydisk itself. For simplicity, in this paper we focus on the case of polylike domains, although we outline the notational changes necessary to establish Theorem \ref{polydisk} in section \ref{polydiskcase}.

We prove Theorem \ref{extn} in section \ref{3pt1}. Although the conclusion is harder to state rigorously, the method is actually fairly general and applies not just to the nonisotopic embeddings described in this paper but to any nonisotopic domains distinguished by a Symplectic Field Theory style invariant, see \cite{EGH}. That is, suppose that a well defined $0$ dimensional moduli space of holomorphic curves can be associated to embeddings of a domain $Q$. Here the holomorphic curves map to some $(\CC P^2 \times \RR^{2(n-2)}) \setminus W$ as above, where $W$ is a smoothing of our embedded $Q$. Also suppose that isotopic domains are associated to cobordant moduli spaces. Let ${\mathcal M}_0$ and ${\mathcal M}_1$ be the moduli spaces associated to embeddings $f_0$ and $f_1$. Then if $f_0$ and $f_1$ both extend to an ellipsoid $E$, the moduli spaces ${\mathcal M}_0$ and ${\mathcal M}_1$ will automatically be cobordant.

In section \ref{3pt2} we give some examples of nonextension results, including Proposition \ref{xtn} $(iii)$.

\end{section}

\begin{section} {Finite energy curves.} \label{polyproof}

This section gives a proof by contradiction of Theorem \ref{polylike}. Some preliminary analysis is carried out in subsections \ref{2pt1} and \ref{2pt2}, then we complete the proof in subsection \ref{2pt3}.

\begin{subsection} {Approximation of $Q$.} \label{2pt1}

Here we describe our smooth approximation $W$ of $Q=Q(b,a_2, \dots ,a_n)$, together with the closed characteristics on the boundary $\partial W$. The analysis is similar to that in \cite{hind}, section $2.1$.

We start by fixing $\delta$ and $\epsilon$ with $0 < \delta << \epsilon$. Recall that our argument will be by contradiction and so we are assuming that there exists a symplectic isotopy $Q_t\subset B^4(R) \times \RR^{2(n-2)}$ with $Q_0=Q$ and $Q_1 \subset B^4(S) \times \RR^{2(n-2)}$, where $R<2a_2+b$ and $S<a_2+b$. We will need to assume that $\epsilon$ is small relative to both $a_2+b-S$ and $2a_2+b-R$. Also, by perturbing the $a_j$ if necessary, we may assume that $\epsilon$, $1/ \epsilon$ and the $1 / a_j$ are linearly independent over the rationals.

Now we choose a function $f:[0,b] \to [0,1]$ with $f(0)=0$, $f(b)=1$, $f'(x), f''(x) \ge 0$ and with the property that there exists an $x_0$ such that $f'(x)=\epsilon$ for $x<x_0 - \delta$ and $f'(x)=\frac{1}{\epsilon}$ for $x>x_0+\delta$.

Given this, we define $W$ as follows. It will be convenient to use symplectic polar coordinates on $\RR^{2n}=\CC^n$, so we set $R_j = \pi |z_j|^2$ and $\theta_j = \arg z_j \in S^1$.

$$W=\{f(R_1)+ \sum_{j=2}^n \frac{R_j}{a_j} \le 1\}.$$

The boundary $\partial W$ is foliated by the Lagrangian tori $L_c = \{R_j = c_j\}$ which degenerate precisely when some of the $R_j=0$. However, using the coordinates $\theta_j$ we can identify the nondegenerate $L_c$ with a fixed torus $T^n$ and the integer homology with $H_1(T^n, \ZZ) = \ZZ^n$.

The characteristic foliation $\ker \omega|_{\partial W}$ is generated by the (Reeb) vectorfield
$$R_W = f'(R_1) \frac{\partial}{\partial \theta_1} + \sum_{j=2}^n \frac{1}{a_j} \frac{\partial}{\partial \theta_j}.$$
In particular the Reeb vectorfield is tangent to the Lagrangian toric fibers $L_c$.

The Reeb vectorfield has two kinds of periodic orbits. The first are the elliptic orbits $\gamma^k = \{z_j=0, j \neq k\} \cap \partial W$, $k=1, \dots ,n$. We use the notation $r\gamma^k$ to denote the $r$-fold cover of $\gamma^k$.

Since the $1/a_j$ are linearly independent all other periodic orbits lie in one of the complex $2$-planes $P_k = \{z_j=0, j \neq 1,k\}$ for $2 \le k \le n$. As $\epsilon$, $\frac{1}{\epsilon}$ and $\frac{1}{a_k}$ are linearly independent orbits in these planes are either elliptic or are called hyperbolic and lie in the region where $x_0 - \delta < R_1 < x_0 + \delta$.

Suppose there exists such an $R_1$ and a rational number written in lowest terms as $\frac{m}{n}$ such that $f'(R_1) = \frac{m}{na_k}$. Then the corresponding torus fiber over $c=(R_1,0 \dots ,0,a_k(1-f(R_1)),0,\dots ,0)$ (the nonzero entries are in positions $1$ and $k$) is foliated by a $1$-parameter family of periodic Reeb orbits in the homology class $(m,0 \dots ,0,n,0,\dots ,0)$. We denote these orbits by $\gamma^k_{m,n}$. The $r$-fold cover of $\gamma^k_{m,n}$ is written as $\gamma^k_{rm,rn}$.

Now, if we fix a symplectic trivialization of $T \RR^{2n} |_{\gamma}$, the tangent bundle of $\RR^{2n}$ restricted to a closed orbit $\gamma$ of $R$ of period $T$, then the derivative of the Reeb flow (extended to act trivially normal to $\partial W$) gives a map $\psi:[0,T] \to \mathrm{Symp}(2n,\RR)$, where $\mathrm{Symp}(2n,\RR)$ is the group of $2n \times 2n$ symplectic matrices. Associated to such a path is a Conley-Zehnder index $\mu(\gamma)$ defined in this case by Robbin and Salamon in \cite{rs}. The analogue of Lemma $2.2$ in \cite{hind} is the following.

\begin{lemma} \label{cz} With respect to the standard basis of $\RR ^{2n}$ the Conley-Zehnder indices are as follows.
\begin{equation}
\begin{split}
\mu(r\gamma^k)=2r+n-1+ 2\lfloor \epsilon ra_k \rfloor + 2\sum_{j \neq k} \lfloor \frac{ra_k}{a_j} \rfloor , \mathrm{if} k \neq 1 \\
\mu(r\gamma^1)=2r+n-1+ 2\sum_j \lfloor \frac{\epsilon r}{a_j} \rfloor \\
\mu(\gamma^k_{m,n})=2(m+n)+\frac{1}{2} + (n-2) + 2\sum_{j \neq k} \lfloor \frac{na_k}{a_j} \rfloor.
\end{split}
\end{equation}
\end{lemma}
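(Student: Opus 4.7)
The plan is to apply the Robbin--Salamon definition of $\mu$ to the linearization of the Hamiltonian flow generated by $H = f(R_1) + \sum_{j\geq 2} R_j/a_j$ (which, up to reparametrization on $\partial W$, agrees with $R_W$, and is extended trivially in the normal direction as stipulated). The key algebraic fact I will use is that the Robbin--Salamon index is additive under symplectic direct-sum decompositions of the ambient tangent space along the orbit. The standard basis gives such a decomposition
\[
T\RR^{2n}\big|_\gamma \;=\; \bigoplus_{j=1}^{n} \langle \partial_{x_j}, \partial_{y_j}\rangle
\]
into $n$ symplectic $2$-planes, and I would compute the contribution of each block separately and then sum.

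For the elliptic orbits $\gamma^k$ with $k\neq 1$, the orbit sits at $R_k=a_k$ and $R_j=0$ for $j\neq k$, and here $f''(R_1)=0$ since $R_1=0$ lies in the region where $f'\equiv\epsilon$. Consequently every Hessian entry of $H$ in action-angle coordinates vanishes along the orbit, and the linearized flow in each $(x_j,y_j)$ plane reduces to the rigid rotation induced by the corresponding $\theta_j$-component of $R_W$. Over the $r$-fold period $T=2\pi r a_k$ this produces rotations by $2\pi r$ in $(x_k,y_k)$, by $2\pi r a_k \epsilon$ in $(x_1,y_1)$, and by $2\pi r a_k/a_j$ in $(x_j,y_j)$ for $j\neq 1,k$. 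Summing the standard Robbin--Salamon contributions --- $2r$ for a full rotation by $2\pi r$, and $2\lfloor\alpha\rfloor+1$ for a non-integral rotation by $2\pi\alpha$, with the linear-independence assumption on $\epsilon,1/\epsilon,1/a_j$ guaranteeing that all rotation numbers avoid integers --- produces the stated formula. The orbit $\gamma^1$ is handled identically after swapping the roles of $k$ and $1$: here $R_1=b$, $f'(b)=1/\epsilon$, $f''(b)=0$, and now $(x_1,y_1)$ carries the full-rotation contribution $2r$ while each $(x_j,y_j)$ for $j\geq 2$ sees rotation $2\pi r\epsilon/a_j$.

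The orbits $\gamma^k_{m,n}$ call for the same decomposition but a more delicate treatment of the block $(x_1,y_1)$. In each transverse plane $(x_j,y_j)$ with $j\neq 1,k$ the flow is again a pure rotation, by $2\pi n a_k/a_j$ over the period $T=2\pi n a_k$, contributing $2\lfloor na_k/a_j\rfloor+1$; summed, these account for the $(n-2)$ and the last sum in the lemma. In $(x_k,y_k)$, $H$ is linear in $R_k$ so the linearized flow is still a pure rotation, by $2\pi n$, contributing $2n$. The new feature is in $(x_1,y_1)$: the rotational component is $2\pi m$, contributing $2m$, but because the orbit lies in the hyperbolic strip $x_0-\delta<R_1<x_0+\delta$ we have $f''(R_1)>0$, so in action-angle coordinates the linearization acquires a positive shear $\bigl(\begin{smallmatrix}1&0\\ s&1\end{smallmatrix}\bigr)$ with $s>0$ along the degenerate Reeb direction, superimposed on the rotation. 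A direct crossing-form calculation then yields the extra $\tfrac{1}{2}$ characteristic of a degenerate endpoint in the Robbin--Salamon formalism.

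The main obstacle is precisely this $\tfrac{1}{2}$-contribution in the $\gamma^k_{m,n}$ case. Computing it cleanly requires tracking the change of trivialization between action-angle coordinates (in which the linearization is manifestly a shear plus identity) and the standard frame (in which the lemma is phrased), verifying that after the $m$ complete rotations in the $(x_1,y_1)$-plane the residual return map is a pure shear along the degenerate direction, and reading off the sign of the crossing form at the endpoint --- which is positive because $f''>0$ in the hyperbolic strip. Once this block is understood, the remaining bookkeeping runs exactly as in the four-dimensional Lemma $2.2$ of \cite{hind}, with the extra $(n-2)$ transverse planes merely contributing the additional floor terms peculiar to dimension $2n>4$.
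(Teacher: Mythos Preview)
Your proposal is correct and follows precisely the approach the paper invokes: the paper gives no explicit proof of this lemma, merely flagging it as ``the analogue of Lemma~2.2 in \cite{hind}'', and your argument --- splitting $T\RR^{2n}|_\gamma$ into coordinate $2$-planes, reading off the rotation number (plus the positive shear in the Morse--Bott case) in each, and summing the Robbin--Salamon contributions --- is exactly the higher-dimensional extension of that computation. The only point to watch is that the paper assumes $f''\ge 0$ rather than $f''>0$ on the transition strip, but strict positivity at the orbit is implicitly required for Morse--Bott nondegeneracy, so your use of it is justified.
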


\end{subsection}

\begin{subsection} {Index and area formulas.} \label{2pt2}

We compactify the open ball $\mathring{B}^4(R)$ by identifying it with the affine part of $\CC P^2(R)$, the complex projective plane with its Fubini-Study form scaled so that lines have area $R$. We are considering a symplectic isotopy $$Q_t \subset \mathring{B}^4(R) \times \RR^{2(n-2)} \subset \CC P^2(R) \times \RR^{2(n-2)}$$ which restricts to an isotopy $W_t \subset \CC P^2(R) \times \RR^{2(n-2)}$ of $W$.

Let $X_t = \CC P^2(R) \times \RR^{2(n-2)} \setminus W_t$ equipped with the restricted symplectic form. We can choose tame almost-complex structures with cylindrical ends $J_t$ on $X_t$ as in \cite{EGH} and then study finite energy curves asymptotic to closed Reeb orbits as in \cite{three}, \cite{hofa}, \cite{hofi}, \cite{hoff}. It is convenient to define the degree $d$ of a finite energy curve to be its intersection number with $\CC P^1(\infty) \times \RR^{2(n-2)}$, where $\CC P^1(\infty)$ is the line at infinity in $\CC P^2(R)$. The basic arrangement has been described in \cite{hind}, section $2.2.1$, but here we work in $\CC P^2(R) \times \RR^{2(n-2)}$ rather than $\CC P^2$.

In this subsection we give an approximate formula for the area and the virtual index formula for finite energy curves, the analogues of Lemmas $2.3$ and $2.7$ in \cite{hind}.

Let $C$ be a genus $0$ finite energy plane with $e^k$ punctures asymptotic to multiples of $\gamma^k$, $1 \le k \le n$, the $i$th one asymptotic to $r^k_i \gamma^k$, $1 \le i \le e^k$. (Here $r^k_i$ is a natural number depending upon $i$ and $k$, hopefully this is not too confusing.) Also, let $C$ have $h^k$ punctures asymptotic to hyperbolic orbits in $P_k$, $2 \le k \le n$, with the $i$th one asymptotic to $\gamma^k_{m^k_i,n^k_i}$, $1 \le i \le h^k$.

\begin{proposition}\label{area}
Up to an error of order $\epsilon$, the symplectic area of $C$ is given by $$\mathrm{area}(C) = \int_C \omega = dR - \sum_{i=1}^{e^1} r^1_i b - \sum_{k=2}^n\sum_{i=1}^{e^k} r^k_i a_k - \sum_{k=2}^n \sum_{i=1}^{h^k} (m^k_i b + n^k_i a_k).$$
\end{proposition}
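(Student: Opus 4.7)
The plan is a Stokes-theorem calculation: $\int_C \omega$ decomposes into a cohomological contribution $dR$ coming from the intersections with the compactifying divisor $\CC P^1(\infty) \times \RR^{2(n-2)}$, minus the sum of the actions of the Reeb orbits at the asymptotic ends. The symplectic form admits the Liouville primitive $\lambda = \frac{1}{2\pi}\sum_j R_j\, d\theta_j$ on the affine chart $\CC^n = (\CC P^2 \setminus \CC P^1(\infty)) \times \RR^{2(n-2)}$. Truncating $C$ at small neighborhoods of its (finitely many, by finite energy) transverse intersections with $\CC P^1(\infty) \times \RR^{2(n-2)}$ and at its asymptotic ends yields a compact surface with boundary on which $\int_C \omega = \int_{\partial} \lambda$ directly.

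Each intersection with the line at infinity contributes $R$. Locally $C$ projects to a graph of a section of the normal bundle of $\CC P^1(\infty)$, and the loop integral of $\lambda$ around a small tube near such an intersection limits to the area of a Fubini-Study line, which by normalization is $R$. Summed over the $d$ intersections this produces the $dR$ term. Equivalently, one caps off $C$ near infinity by $d$ small disks symplectically approximating Fubini-Study lines.

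Each asymptotic end contributes $-\int_\gamma \lambda$, the negative action of the limit Reeb orbit. Using the Reeb vector $R_W = f'(R_1)\partial_{\theta_1} + \sum_{j\ge 2} a_j^{-1}\partial_{\theta_j}$ and the values of the $R_j$ on each orbit, one computes: $r\gamma^1$ lies at $R_1 = b$, $R_j=0$ for $j\ne 1$ (since $f(b)=1$) and has action exactly $rb$; $r\gamma^k$ for $k\ge 2$ lies at $R_k = a_k$, $R_j=0$ otherwise, with action exactly $r a_k$; and $\gamma^k_{m,n}$ lies on the Lagrangian torus $\{R_1 = R_1^*,\, R_k = a_k(1-f(R_1^*))\}$ where $f'(R_1^*) = m/(na_k)$, with action $m R_1^* + n a_k(1 - f(R_1^*))$ obtained by pairing $\lambda$ with the class $(m,0,\dots,0,n,0,\dots,0)$.

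The last step is to reduce the hyperbolic action to $mb + na_k + O(\epsilon)$. The constraint $\int_0^b f'(x)\,dx = 1$, with slopes $\epsilon$ and $1/\epsilon$ outside the transition window $[x_0-\delta, x_0+\delta]$, forces $b - x_0 \approx \epsilon$; hence $R_1^* = b - \epsilon + O(\delta)$ and $f(R_1^*) \in [\epsilon b - O(\epsilon^2),\ \delta/\epsilon]$, which is $O(\epsilon)$ once $\delta$ is chosen sufficiently small relative to $\epsilon^2$. The main obstacle is precisely this uniform control: since $R_1^*$ depends on the resonance $m/(na_k)$, the estimate must be valid for every orbit appearing in $C$, but because a finite-energy curve of bounded area and degree can only be asymptotic to finitely many orbit classes with bounded $(m,n)$, the error can be made uniformly $O(\epsilon)$. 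Assembling the $dR$ term and the three families of actions with the appropriate signs yields the formula.
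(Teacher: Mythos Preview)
Your proof is correct and follows essentially the same approach as the paper's: both compute the area by a Stokes/capping argument, decomposing $\int_C \omega$ into the cohomological contribution $dR$ from the line at infinity and the (negative) actions of the asymptotic Reeb orbits. The paper phrases this as gluing disks in $\partial W_t$ to obtain a closed degree-$d$ cycle of area $dR$, while you apply Stokes directly with the Liouville primitive and compute the actions explicitly; your version supplies the details (in particular the $O(\epsilon)$ estimate for the hyperbolic orbits via $x_0 \approx b-\epsilon$ and $f(R_1^*)=O(\epsilon)$) that the paper leaves to the reader. One minor imprecision: your stated upper bound $f(R_1^*)\le \delta/\epsilon$ should really be $f(R_1^*)\le \epsilon x_0 + 2\delta/\epsilon$, but this does not affect the conclusion once $\delta \ll \epsilon^2$.
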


Note that the formula immediately implies that any nonconstant curves (which have positive area) must have degree $d \ge 1$.

\begin{proof}
To see this we can glue a disk in $\partial W_t$ to each asymptotic end to produce a closed cycle of degree $d$ in $\CC P^2$, which has area $dR$. The areas of these disks are roughly the negative terms in our formula (the error term comes because our hyperbolic orbits lie on $\partial W_t$ rather than the singular part of $\partial Q_t$).
\end{proof}

\begin{proposition}\label{index}
The virtual index of $C$ in the space of curves with asymptotic limits allowed to vary is given by
\begin{equation}
\begin{split}
\mathrm{index(C)} = (n-3)(2-\sum_{k=1}^n e^k - \sum_{k=2}^n h^k) + 6d \\
- \sum_{i=1}^{e^1}(2r^1_i+n-1+ 2\sum_j \lfloor \frac{\epsilon r^1_i}{a_j} \rfloor) \\
- \sum_{k=2}^n \sum_{i=1}^{e^k}(2r^k_i+n-1+ 2\lfloor \epsilon r^k_i a_k \rfloor + 2\sum_{j \neq k} \lfloor \frac{r^k_i a_k}{a_j} \rfloor ) \\
- \sum_{k=2}^n \sum_{i=1}^{h^k} (2(m^k_i+n^k_i) + (n-2) + 2\sum_{j \neq k} \lfloor \frac{n^k_ia_k}{a_j} \rfloor).
\end{split}
\end{equation}
\end{proposition}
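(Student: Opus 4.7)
The plan is to apply the general virtual Fredholm index formula for punctured genus-zero holomorphic curves from Symplectic Field Theory (see \cite{BEHWZ,EGH}), specialized to our setting where $W_t$ sits at the negative end of the cobordism so that all asymptotic punctures of $C$ are negative. That formula reads
$$\mathrm{index}(C) = (n-3)(2-s) + 2c_1^{\Phi}(C) - \sum_{i=1}^s \mu_{CZ}^{\Phi}(\gamma_i),$$
with a correction of $+\frac{1}{2}\dim \Sigma_i$ at each puncture whose asymptotic orbit lies on a Morse--Bott critical manifold $\Sigma_i$ of positive dimension. The total number of punctures is $s = \sum_{k=1}^n e^k + \sum_{k=2}^n h^k$, producing the prefactor $(n-3)(2 - \sum_k e^k - \sum_k h^k)$.

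For the Chern term I would use exactly the trivialization underlying Lemma \ref{cz}: the standard global framing of $T\RR^{2n}$, extended in the obvious way to $\CC P^2(R) \times \RR^{2(n-2)}$ away from $\CC P^1(\infty) \times \RR^{2(n-2)}$. Capping off each puncture of $C$ by a disk in $\partial W_t$ produces a closed cycle of the same degree $d$ (where $d$ is the intersection number with $\CC P^1(\infty) \times \RR^{2(n-2)}$). Since $c_1(T(\CC P^2 \times \RR^{2(n-2)})) = 3h$, with $h$ the pullback of the hyperplane class from $\CC P^2$, this evaluation yields $c_1^{\Phi}(C) = 3d$ and hence $2c_1^{\Phi}(C) = 6d$, the second term in the stated formula.

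Finally I substitute the Conley--Zehnder values from Lemma \ref{cz}. The elliptic punctures $r^1_i \gamma^1$ and $r^k_i \gamma^k$ (for $k \neq 1$) are isolated, so their contributions plug in directly and account for the first two sums of negative terms. For each hyperbolic puncture $\gamma^k_{m^k_i,n^k_i}$, the Reeb orbits appear in a one-parameter Morse--Bott family of parallel trajectories on the torus fibre $L_c$, so $\frac{1}{2}\dim \Sigma_i = \frac{1}{2}$; this exactly cancels the $+\frac{1}{2}$ Robbin--Salamon correction in the hyperbolic line of Lemma \ref{cz}, leaving the integer contribution $-(2(m^k_i + n^k_i) + (n-2) + 2\sum_{j \neq k} \lfloor n^k_i a_k / a_j \rfloor)$ per hyperbolic end. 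Assembling everything gives the stated formula.

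The argument is essentially bookkeeping, closely parallel to Lemma 2.7 of \cite{hind}; the main points that must be handled with care are matching the trivialization used in the Chern number computation to the one normalizing the Conley--Zehnder indices of Lemma \ref{cz}, and correctly applying the Morse--Bott half-dimension correction at the hyperbolic punctures so that the resulting index is an integer. Neither constitutes a genuine obstacle, but either can introduce half-integer or sign errors if treated sloppily.
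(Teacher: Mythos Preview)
Your proposal is correct and follows essentially the same approach as the paper: apply the general Morse--Bott index formula $\mathrm{index}(C) = (n-3)(2-s) + 2c_1(C) - \sum_i (\mu(\gamma_i) - \tfrac{1}{2}\dim V_i)$ from \cite{B}, normalize $c_1(C)=3d$, and substitute the Conley--Zehnder indices from Lemma~\ref{cz}. Your write-up is in fact slightly more explicit than the paper's about the capping argument for the Chern class and the cancellation of the Robbin--Salamon half-integer against the Morse--Bott correction at the hyperbolic ends.
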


Note here that each elliptic limit not a cover of $\gamma^1$ contributes a negative term on the third line of the index formula, the limits asymptotic to $\gamma^1$ contribute negative terms on the second line, and the hyperbolic limits each contribute a negative term on the last line.

\begin{proof}
The general index formula for genus $0$ curves with $s$ negative ends is $$\mathrm{index}(C) = (n-3)(2 - s) + 2c_1(C) - \sum_{i=1}^{s} (\mu(\gamma_i) -\frac{1}{2} \mathrm{dim} V_i).$$ For this formula, see \cite{B}.
Here $c_1(C)$ is the Chern class which we have normalized to be $3d$, where $d$ is the degree, $\mu(\gamma_i)$ is the Conley-Zehnder index of the limiting Reeb orbit $\gamma_i$ corresponding to the $i$th end, and $\mathrm{dim} V_i$ is the dimension of the family of orbits containing $\gamma_i$. In our case this dimension is $0$ for an elliptic orbit and $1$ for a hyperbolic orbit. Substituting the Conley-Zehnder indices from Lemma \ref{cz} we get the formula as required.
\end{proof}

In the remainder of this subsection we record a few algebraic consequences of the area and index formulas.

\begin{lemma} \label{con1}
Suppose that a finite energy curve $C$ has degree $1$ and $\mathrm{area}(C) \le a_2$ (up to an error of order $\epsilon$). Then $C$ either has a single hyperbolic asymptotic limit $\gamma^2_{1,1}$, or all asymptotic limits are elliptic and satisfy $$b < \sum_{i=1}^{e^1} r^1_i b + \sum_{k=2}^n \sum_{i=1}^{e^k} r^k_i a_k  <2a_2+b.$$
\end{lemma}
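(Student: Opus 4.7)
The plan is to exploit the two inequalities on the total asymptotic action that come from the area formula of Proposition~\ref{area}. With $d=1$ it reads $\mathrm{area}(C)=R-A_e-A_h+O(\epsilon)$, where
\[
A_e := \sum_{i=1}^{e^1} r^1_i\, b + \sum_{k=2}^n\sum_{i=1}^{e^k} r^k_i\, a_k, \qquad A_h := \sum_{k=2}^n \sum_{i=1}^{h^k} \bigl(m^k_i\, b + n^k_i\, a_k\bigr).
\]
The hypothesis $\mathrm{area}(C)\le a_2$ combined with the standing assumption $a_2+b<R$ (part of the contradiction setup, since $S<a_2+b<R$) yields $A_e+A_h>b$, while $\mathrm{area}(C)\ge 0$ together with $R<2a_2+b$ yields $A_e+A_h<2a_2+b$. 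The whole lemma is the extraction of the stated dichotomy from these two bounds, with the smallness of $\epsilon$ relative to the gaps $2a_2+b-R$ and $R-(a_2+b)$ ensuring the strict inequalities survive the $O(\epsilon)$ errors.

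If no hyperbolic limits are present then $A_h=0$ and the two bounds read $b<A_e<2a_2+b$, which is the second conclusion. So suppose at least one hyperbolic asymptote occurs. First I rule out any $\gamma^k_{m,n}$ with $k\ge 3$: it contributes at least $b+a_k$ to $A_h$, and $a_k>2a_2$ makes this exceed $2a_2+b$, inconsistent with $A_e+A_h<2a_2+b$ as $A_e\ge 0$. Hence every hyperbolic limit is of the form $\gamma^2_{m,n}$ with $m,n\ge 1$. Next, any such $(m,n)\ne(1,1)$ contributes $mb+na_2\ge\min(2b+a_2,\,b+2a_2)$, and using $a_2<b$ both options are $\ge 2a_2+b$, again violating the upper bound. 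Two copies of $\gamma^2_{1,1}$ would contribute $2(b+a_2)>2a_2+b$, so at most one hyperbolic asymptote can occur, and it must be $\gamma^2_{1,1}$.

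It remains to show a single $\gamma^2_{1,1}$ asymptote cannot coexist with any elliptic asymptote. If $\gamma^2_{1,1}$ is present then $A_h=b+a_2$, so the upper bound forces $A_e<a_2$. But every elliptic orbit on $\partial W$ has action $rb$ or $ra_k$ ($k\ge 2$, $r\ge 1$), and the minimum possible value is $\min(b,a_2,a_3,\dots,a_n)=a_2$, attained by $\gamma^2$ (since $a_2<b$ and $a_k>2a_2>a_2$ for $k\ge 3$). Hence $A_e<a_2$ forces $A_e=0$, and $\gamma^2_{1,1}$ is the sole asymptote of $C$. The delicate point throughout is how tightly the relevant hyperbolic actions $2b+a_2$, $b+2a_2$ and $2b+2a_2$ hug the upper bound $2a_2+b$; this is exactly what forces the choice of $\epsilon$ at the start of Section~\ref{2pt1} and what pins down $\gamma^2_{1,1}$ as the unique hyperbolic possibility.
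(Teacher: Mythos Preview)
Your proof is correct and follows essentially the same approach as the paper's: both derive the double inequality $b < A_e + A_h < 2a_2+b$ from the area formula of Proposition~\ref{area} combined with $a_2+b<R<2a_2+b$, then use the hypotheses $a_2<b$ and $a_k>2a_2$ for $k\ge 3$ to force any hyperbolic end to be a single $\gamma^2_{1,1}$. You simply spell out the case analysis (ruling out $k\ge 3$, then $(m,n)\ne(1,1)$, then multiple copies, then coexisting elliptic ends) that the paper compresses into one sentence.
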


\begin{proof}
As nonconstant curves must have positive area, the area inequality is equivalent to
$$R-a_2 \le \sum_{i=1}^{e^1} r^1_i b + \sum_{k=2}^n \sum_{i=1}^{e^k} r^k_i a_k +  \sum_{k=2}^n \sum_{i=1}^{h^k} (m^k_i a_k + n^k_i b) \le R.$$
As $a_2+b <R < 2a_2+b$ (and $\epsilon$ is small relative to the differences) this gives
$$b < \sum_{i=1}^{e^1} r^1_i b + \sum_k \sum_{i=1}^{e^k} r^k_i a_k + \sum_{k=2}^n \sum_{i=1}^{h^k} (m^k_i a_k + n^k_i b) < 2a_2+b.$$
Since $a_k > 2a_2$ for all $k \ge 3$ we see that if there exists a hyperbolic orbit it must be of type $\gamma^2_{1,1}$ and be the only asymptotic limit. On the other hand, if all limits are elliptic then they satisfy the inequality of the lemma.
\end{proof}

\begin{lemma} \label{con3}
Suppose that a finite energy curve $C$ has degree $1$, virtual index at least $-1$, and only elliptic asymptotic limits. Then it has only a single asymptotic limit, that is, $C$ is a finite energy plane.
\end{lemma}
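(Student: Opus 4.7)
The plan is to substitute the hypotheses directly into the virtual index formula of Proposition~\ref{index}, bound the Conley--Zehnder index of each elliptic end uniformly from below by $n+1$, and derive a contradiction whenever the number of ends is at least $2$.

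With $d=1$, $h^k=0$ for all $k$, and $s=\sum_{k=1}^n e^k$ denoting the total number of asymptotic ends, Proposition~\ref{index} reduces to
\begin{equation*}
\mathrm{index}(C) \;=\; (n-3)(2-s) + 6 - \sum_{i=1}^{s}\mu(\gamma_i).
\end{equation*}

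The first step is to show $\mu(\gamma)\ge n+1$ for every elliptic orbit $\gamma$ that could occur as an asymptotic limit. Reading off Lemma~\ref{cz}: since $\epsilon$ is small and the additional floor terms are all nonnegative, for every $r\ge 1$ one has $\mu(r\gamma^1)\ge 2r+n-1\ge n+1$ and $\mu(r\gamma^2)\ge 2r+n-1\ge n+1$. For $r\gamma^k$ with $k\ge 3$ the standing assumption $a_k>2a_2$ forces $\lfloor r a_k/a_2\rfloor\ge 2r$, and hence the stronger bound $\mu(r\gamma^k)\ge 6r+n-1$.

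Now assume for contradiction that $s\ge 2$. Substituting $\mu(\gamma_i)\ge n+1$ into the index formula gives
\begin{equation*}
\mathrm{index}(C) \;\le\; (n-3)(2-s) + 6 - s(n+1) \;=\; 2n - (2n-2)s.
\end{equation*}
For $s\ge 2$ and $n\ge 3$ the right-hand side is at most $4-2n\le -2$, contradicting $\mathrm{index}(C)\ge -1$. Thus $s\le 1$, and since a degree $1$ finite energy curve is nonconstant and must therefore carry at least one asymptotic end, $s=1$, i.e.\ $C$ is a plane.

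The only delicate step is the uniform lower bound $\mu\ge n+1$; the bound for $k\ge 3$ relies essentially on the gap $a_k>2a_2$ built into the hypotheses, while for $r\gamma^1$ and $r\gamma^2$ the leading term $2r+n-1$ already suffices. Once those bounds are in hand the remaining deduction is an easy arithmetic inequality.
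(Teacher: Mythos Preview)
Your proof is correct and follows essentially the same approach as the paper's: both bound each elliptic Conley--Zehnder index below by $n+1$, plug this into the index formula from Proposition~\ref{index} with $d=1$ to obtain $\mathrm{index}(C)\le 2n-(2n-2)s$, and conclude $s\le 1$ from $\mathrm{index}(C)\ge -1$ and $n\ge 3$. Your explicit verification of the bound $\mu\ge n+1$ (which the paper simply asserts) and your remark that $s\ge 1$ because a degree $1$ curve is nonconstant are harmless additions; the sharper bound for $k\ge 3$ is not needed here but does no harm.
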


\begin{proof}
Let $E$ be the total number of elliptic asymptotic limits. Since all terms in the sums on the second and third lines of the index formula of Proposition \ref{index} are at least $n+1$, we have the formula
$$-1 \le \mathrm{index(C)} \le (n-3)(2-E) + 6 - (n+1)E = 2(n- (n-1)E).$$
Hence $(n-1)E \le n$ and so as $n \ge 3$ we have $E \le 1$ as required.
\end{proof}

Putting the previous two lemmas together we have the following, which describes the curves we will be interested in.

\begin{lemma} \label{con2}
Suppose that a finite energy curve $C$ has degree $1$ and $\mathrm{area}(C) \le a_2$ and $\mathrm{index}(C) \ge -1$. Then $C$ is a finite energy plane asymptotic to either $\gamma^2_{1,1}$, $2\gamma^1$ or $2\gamma^2$.
\end{lemma}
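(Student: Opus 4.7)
The plan is to combine Lemmas \ref{con1} and \ref{con3} to reduce $C$ to a plane with a single asymptotic end, and then use the index formula of Proposition \ref{index} to pin down the possible limit orbits.

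First I apply Lemma \ref{con1}: either $C$ has a single hyperbolic end asymptotic to $\gamma^2_{1,1}$ (which is already one of the conclusions), or every asymptotic limit of $C$ is elliptic. In the latter case, Lemma \ref{con3} forces $C$ to have exactly one end, so $C$ is a finite energy plane asymptotic to some multiple cover $r\gamma^k$, and it remains to show that $(k,r) \in \{(1,2),(2,2)\}$.

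To analyze this, I specialize Proposition \ref{index} to a degree-$1$ plane with one elliptic end. For $k \ge 2$ this yields
\[
\mathrm{index}(C) = 4 - 2r - 2\lfloor \epsilon r a_k \rfloor - 2\sum_{j \ne k}\lfloor r a_k/a_j\rfloor,
\]
and for $k = 1$ it yields $\mathrm{index}(C) = 4 - 2r - 2\sum_j \lfloor \epsilon r/a_j\rfloor$. Since all floor terms are nonnegative integers, the bound $\mathrm{index}(C) \ge -1$ immediately gives the coarse inequality $r \le 2$. I then treat the three ranges of $k$ separately. For $k = 1$, the area inequality from Lemma \ref{con1} reads $b < rb < 2a_2+b$, which excludes $r = 1$, so $r = 2$, giving $2\gamma^1$. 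For $k = 2$, the area inequality $b < r a_2 < 2a_2+b$ together with the standing hypothesis $a_2 < b$ excludes $r = 1$, and $r \le 2$ then forces $r = 2$, giving $2\gamma^2$.

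The only step requiring any care is the case $k \ge 3$, which must be ruled out entirely. Here I would use the hypothesis $a_k > 2a_2$ (which gives $\lfloor r a_k/a_2\rfloor \ge 2r$ as a summand in the $j \ne k$ floor sum) together with the area lower bound $r a_k > b$ (which gives $\lfloor r a_k/b\rfloor \ge 1$) to estimate $\mathrm{index}(C) \le 4 - 2r - 4r - 2 = 2 - 6r \le -4$, contradicting the index hypothesis. This is where both parts of the hypothesis on the $a_j$ enter together, and it is the one place where one must combine the area and index constraints rather than use each in isolation.
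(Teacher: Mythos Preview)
Your argument is correct and follows the same route as the paper: Lemmas \ref{con1} and \ref{con3} reduce to a plane asymptotic to some $r\gamma^k$, and then the index formula combined with the area window pins down $(k,r)$.

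One small correction in the $k\ge 3$ case: there is no term $\lfloor r a_k/b\rfloor$ in the index formula. In Proposition \ref{index} the $z_1$-direction contributes $2\lfloor \epsilon r a_k\rfloor$, not $2\lfloor r a_k/b\rfloor$; the sum $\sum_{j\neq k}\lfloor r a_k/a_j\rfloor$ runs only over $j\in\{2,\dots,n\}\setminus\{k\}$. So the area bound $ra_k>b$ is not actually invoked here. This is harmless for your argument, since the single term $\lfloor r a_k/a_2\rfloor \ge 2r$ already gives $\mathrm{index}(C)\le 4-2r-4r = 4-6r \le -2$, contradicting $\mathrm{index}(C)\ge -1$. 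The paper does the same thing with the cruder bound $\lfloor r a_k/a_2\rfloor \ge 2$ to get $\mathrm{index}(C)\le -2r$; neither version uses the area constraint for $k\ge 3$, so your closing remark that this case requires combining area and index should be dropped.
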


\begin{proof}
By Lemmas \ref{con1} and \ref{con3}, if the curve $C$ is not asymptotic to $\gamma^2_{1,1}$ then it is a finite energy plane asymptotic to a cover of one of the $\gamma^k$, say asymptotic to $r\gamma^k$.

Suppose first that $k=1$. Then by Lemma \ref{con1} we have $b<rb <2a_2+b$ and Proposition \ref{index} again implies that $r \le 2$. Putting the two together we have $r=2$.

Next suppose that $k=2$. Again by Lemma \ref{con1} we have $b<ra_2 <2a_2+b$ and by Proposition \ref{index} we have $$\mathrm{index}(C) \le (n-3) + 6 - (2r+(n-1)).$$ As $\mathrm{index}(C) \ge -1$ this implies that $r \le 2$. By our original hypothesis in Theorem \ref{polylike} we have $a_2<b$, and combining the two inequalities gives $r=2$.

Finally suppose that $k \ge 3$. By hypothesis $a_k > 2a_2$ and so the term $2\lfloor \frac{r a_k}{a_2} \rfloor$ in the index formula contributes at least $4$. Hence $$\mathrm{index}(C) \le (n-3) + 6 - (2r+(n-1)+4) \le -2r$$ a contradiction as required.
\end{proof}

\end{subsection}

\begin{subsection} {Moduli spaces of finite energy planes.} \label{2pt3}

Let us fix an orbit $\eta_t$ of type $\gamma^2_{1,1}$ in each $\partial W_t$. Consider the corresponding moduli space
$${\mathcal M}_t = \{u: \CC \to X_t | \mathrm{degree}(u)=1, \overline{\partial}_{J_t} u =0, u \sim \eta_t \} \slash G$$
where $u \sim \eta$ means that $u$ is asymptotic at infinity to $\eta$, and $G$ is the reparameterization group of $\CC$.
The area formula of Proposition \ref{area} says that curves in ${\mathcal M}_t$ have area roughly $R-(a_2+b)$.

We will need to choose the almost-complex structure $J_0$ such that the line at infinity $\CC P^1(\infty) \times \RR^{2(n-2)}$ is complex and such that it is invariant with respect to the $T^{n-2}$ action rotating the $(z_3, \dots ,z_n)$ planes. This is possible since $W=W_0$ is invariant under the same action. A genericity assumption will also be made as explained in Lemma \ref{pf1}. The almost-complex structure $J_1$ can be assumed to be the standard product integrable structure on $(\CC P^2(R) \setminus B^4(S)) \times \RR^{2(n-2)}$ for some $S<a_2+b$, as $W_1 \subset \mathring{B}^4(a_2+b) \times \RR^{2(n-2)}$.

\begin{lemma} \label{dimension}
The virtual dimension of ${\mathcal M}_t$ is $0$.
\end{lemma}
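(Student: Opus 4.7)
The plan is to invoke Proposition \ref{index} directly for the specific finite energy plane representing a class in $\mathcal{M}_t$, and then apply a one-dimensional correction to account for the fact that $\mathcal{M}_t$ fixes the asymptotic orbit rather than allowing it to vary within its Morse--Bott family. A curve $C \in \mathcal{M}_t$ is a finite energy plane of degree $d=1$ with a single hyperbolic negative end of type $\gamma^2_{1,1}$, so in the notation of Proposition \ref{index} we have $e^k = 0$ for every $k$, $h^k = 0$ for $k \ne 2$, and $h^2 = 1$ with $m^2_1 = n^2_1 = 1$. All but the hyperbolic sum vanishes, leaving only the contribution from $(k,i) = (2,1)$:
\begin{equation*}
\mathrm{index}(C) = (n-3)(2 - 1) + 6 - \Bigl( 2(1+1) + (n-2) + 2 \sum_{j \ne 2}\lfloor a_2/a_j \rfloor \Bigr).
\end{equation*}

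To dispose of the floor sum I would use the standing hypotheses of Theorem \ref{polylike}: the $j=1$ term vanishes because $a_2 < b$ (taking $a_1 := b$ in the formula), and every $j \ge 3$ term vanishes because $a_j > 2a_2$. The arithmetic then collapses to $\mathrm{index}(C) = (n-3) + 6 - 4 - (n-2) = 1$.

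Proposition \ref{index} computes the virtual dimension of the moduli space of such curves (modulo reparameterization) when the asymptotic limit is permitted to range over its Morse--Bott family; in our case that family is the $S^1$-worth of $\gamma^2_{1,1}$-orbits on the Lagrangian torus fibre in $\partial W_t$, of real dimension one. Since $\mathcal{M}_t$ imposes the additional constraint that the asymptotic is the specific orbit $\eta_t$, one subtracts $\dim V = 1$ to obtain $\dim \mathcal{M}_t = 1 - 1 = 0$.

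The only point requiring real care is the bookkeeping at the end: one must check that what Proposition \ref{index} already quotients out, via the Robbin--Salamon $\tfrac{1}{2}\dim V$ correction embedded inside the hyperbolic term, really corresponds to letting the asymptotic orbit drift within its $S^1$-family, so that the passage to the fixed-orbit moduli space $\mathcal{M}_t$ costs exactly one real dimension. Once that is confirmed, the lemma is essentially a one-line substitution into Proposition \ref{index}.
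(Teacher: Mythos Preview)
Your proof is correct and follows exactly the paper's approach: the paper simply asserts that Proposition \ref{index} gives virtual dimension $1$ for degree-$1$ planes asymptotic to a $\gamma^2_{1,1}$ orbit, and then subtracts $1$ for the constraint of hitting the specific orbit $\eta_t$. You have just filled in the arithmetic the paper omits; one minor notational point is that the sum $\sum_{j\neq k}$ in the hyperbolic term is really over $j\ge 2$, $j\neq k$ (the $z_1$-direction is already accounted for in the $2(m+n)+\tfrac12$ part), but your ad hoc convention $a_1:=b$ yields the same vanishing, so the conclusion is unaffected.
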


\begin{proof}
Proposition \ref{index} gives virtual dimension $1$ for finite energy planes of degree $1$ asymptotic to an orbit of type $\gamma^2_{1,1}$. However a curve lies in ${\mathcal M}_t$ only if it is asymptotic to the specific orbit $\eta_t$, and this imposes a $1$-dimensional constraint.
\end{proof}

The moduli spaces when $t=0,1$ are easily described.

\begin{lemma}\label{pf1} There exists an almost-complex structure $J_0$ such that the moduli space ${\mathcal M}_0$ consists of a single, regular curve.
\end{lemma}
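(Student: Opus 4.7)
The plan is to exploit the $T^{n-2}$-symmetry of $W_0$ and $J_0$ to reduce the counting to a problem on the $4$-dimensional slice $\Sigma \eqdef \CC P^2(R) \times \{0\}$, where the analogous construction from \cite{hind} applies. I would choose $J_0$ to be $T^{n-2}$-invariant, to make the line at infinity $\CC P^1(\infty) \times \RR^{2(n-2)}$ complex, to be cylindrical near $\partial W_0$, and to restrict on $\Sigma$ to the standard integrable complex structure of $\CC P^2(R)$. Then $W_0 \cap \Sigma$ is a smoothing $W''$ of the $4$-dimensional polydisk $P(b,a_2)$, and the chosen orbit $\eta_0 \subset P_2 \subset \Sigma$ lies in the pointwise fixed locus of the $T^{n-2}$-action.

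For existence, the $4$-dimensional model of \cite{hind} produces a $J_0|_\Sigma$-holomorphic finite energy plane $u_0$ of degree one in $\Sigma \setminus W''$ asymptotic to $\eta_0$; concretely, $u_0$ is the complement in a suitably chosen complex line $L \subset \CC P^2(R)$ of the disk $L \cap W''$, whose boundary can be arranged to coincide with $\eta_0$ by moving $L$ in an appropriate pencil. Viewed as a map to $X_0$, it is a $J_0$-holomorphic element of $\mathcal{M}_0$. For uniqueness, the $T^{n-2}$-action preserves $\mathcal{M}_0$ since it fixes $\eta_0$ pointwise; any element whose image is not contained in $\Sigma$ would generate a positive-dimensional orbit in $\mathcal{M}_0$, contradicting Lemma \ref{dimension} together with the regularity to be established. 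Hence every element of $\mathcal{M}_0$ lands in $\Sigma$, and positivity of intersections inside the complex surface $\Sigma$, combined with the area bound from Proposition \ref{area}, forces it to coincide with $u_0$.

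The main obstacle is regularity of $u_0$ in the higher-dimensional setting. Tangentially along $\Sigma$ this is automatic, as $u_0$ is an embedded rational curve of Chern class $3$ in a complex surface. For the normal direction, the bundle $u_0^* T X_0 / T u_0$ splits $T^{n-2}$-equivariantly as a sum of trivial complex line bundles indexed by $j = 3, \dots, n$, with the $j$-th summand rotated by the $j$-th $S^1$ factor with weight one. An equivariant Sard--Smale argument then permits me to perturb $J_0$ by $T^{n-2}$-invariant tensors supported away from $\partial W_0$ and from $\CC P^1(\infty) \times \RR^{2(n-2)}$, making each isotypic summand of the normal Cauchy-Riemann operator surjective independently while retaining all prior constraints on $J_0$. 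The delicate point is verifying that such an equivariant perturbation class is sufficiently rich to reach a transverse structure, but this follows because the $T^{n-2}$-action on each normal summand is nontrivial, so equivariant perturbations span the relevant obstruction spaces.
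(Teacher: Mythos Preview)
Your overall strategy matches the paper's: restrict to the $T^{n-2}$-fixed slice, import the $4$-dimensional result from \cite{hind}, and use equivariant regularity to exclude curves off the slice. However, two steps do not go through as written.

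First, you cannot take $J_0|_\Sigma$ to be the standard integrable structure on $\CC P^2(R)$ and simultaneously ask that $J_0$ be cylindrical near $\partial W_0$: these are incompatible along $\partial W'' = \partial W_0 \cap \Sigma$. Related to this, the ``concrete'' description of $u_0$ as a complex line minus a disk is not correct; the intersection of a projective line with $\partial W''$ need not be a Reeb orbit of type $\gamma^2_{1,1}$, and the existence in \cite{hind} is obtained by a degeneration/compactness argument, not by exhibiting a line. You should simply cite \cite{hind} for existence in $Y = X_0 \cap \Sigma$ for a cylindrical $J_0|_Y$, and drop the integrability claim.

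Second, and more seriously, your uniqueness argument inside $\Sigma$ is a genuine gap. Saying ``positivity of intersections \dots\ forces it to coincide with $u_0$'' is not enough: two distinct degree-$1$ planes asymptotic to the \emph{same} orbit $\eta_0$ can a priori be disjoint, and capping them off gives homology classes with intersection number $1$, which is consistent with a single intersection at the origin inside $W''$. The paper's argument is more delicate: one embeds $u_0$ in a $1$-parameter family $u_t$ asymptotic to nearby $\gamma^2_{1,1}$ orbits (automatic regularity in dimension $4$), computes the asymptotic operator at $\eta_0$, and shows that any eigenvector with positive eigenvalue has nonzero winding. Hence a second curve $u_1$ asymptotic to $\eta_0$ must wind around $u_0$ near the puncture, forcing it to intersect some $u_t$ with $t \neq 0$; capping off then contradicts positivity of intersection. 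You need this asymptotic winding analysis, not just positivity of intersection.

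On the equivariant regularity, your sketch is along the right lines but omits the key hypothesis that makes the invariant Sard--Smale argument work: one needs each curve to meet some $T^{n-2}$-orbit in a single point. The paper secures this via positivity of intersection with $\CC P^1(\infty) \times \RR^{2(n-2)}$, which is $J_0$-complex; a degree-$1$ curve hits it transversally in exactly one point, hence meets exactly one torus orbit there. You should invoke this explicitly, and then cite \cite{hindker} (Proposition~3.16 and Lemma~3.17) for the equivariant transversality and for regularity of the slice curves in the full moduli space.
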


As this is a direct generalization of Lemma $2.8$ in \cite{hind}, utilizing the analysis in \cite{hindker} to extend the results to higher dimension, we restrict here to an outline.

{\it Outline of proof.}
As $J_0$ is invariant under rotations of the $(z_3, \dots ,z_n)$ planes, the $(z_1,z_2)$-plane $P_1 = \{z_3 = \dots =z_n=0\}$ is $J_0$-invariant. Hence $J_0$ can be restricted to $Y=X_0 \cap P_1$ to give an almost-complex manifold with a cylindrical end over $\partial Y := \partial W_0 \cap P_1$. The almost-complex manifold $Y$ is exactly the one studied in \cite{hind}, and elements of ${\mathcal M}_0$ lying in $Y$ form a moduli space $\tilde{{\mathcal M}}_0$ in their own right. In particular Lemma $2.8$ from \cite{hind} implies that $\tilde{{\mathcal M}}_0$ is nonempty, that is, there exists an element of ${\mathcal M}_0$ lying in $Y$. To complete the proof we will show first that there can be no more than one element of $\tilde{{\mathcal M}}_0$ and second that, for a generic choice of invariant $J_0$, all elements of ${\mathcal M}_0$ must lie in $Y$. Lemma $3.17$ in \cite{hindker} shows that, for invariant almost-complex structures, curves in $\tilde{{\mathcal M}}_0$ are regular in ${\mathcal M}_0$.

For the first part, we argue by contradiction and suppose that two distinct curves $u_0$ and $u_1$ represent equivalence classes in $\tilde{{\mathcal M}}_0$. Automatic regularity in dimension $4$ (see \cite{wendl}, Theorem $1$, or the discussion after Theorem $2.9$ in \cite{hind}) implies that $u_0$, say, can be included in a local $1$-parameter family of curves $u_t$, $-\epsilon < t < \epsilon$, with a single curve in the family asymptotic to each $\gamma^2_{1,1}$ orbit close to $\eta_0$. Meanwhile, as $u_0$ and $u_1$ are both asymptotic to $\eta_0$, on a suitable subset of the cylindrical end $(-\infty, S_0] \times \partial Y$ we can represent $u_1$ as a section $\xi$ of the normal bundle to the image of $u_0$. Furthermore, if $S_0$ is sufficiently negative, the section $\xi$ has no zeros and so defines a winding of $u_1$ about $u_0$. For this see \cite{hofi}. This winding is the same as the winding of an eigenvector of an asymptotic operator associated to the orbit $\eta_0$, and as we are dealing with a negative puncture the associated eigenvalue must be positive.

Now, the asymptotic operator acts on sections of the normal bundle to $\eta_0$ in $\partial Y$, which has an induced complex structure still called $J_0$. With respect to a basis of the normal bundle extending a tangent vector to the space of $\gamma^2_{1,1}$ orbits, the asymptotic operator takes the form $$-J_0 \frac{d}{dt} - T \begin{pmatrix} 0 & 0 \\ 0 & 1 \end{pmatrix},$$ where $T$ is the period of $\eta_0$. We see that the only eigenvectors with winding number $0$ have eigenvalues $0$ or $-T$ and so can conclude that in this basis $u_1$ must wind around $u_0$. Hence $u_1$ must intersect the $u_t$, which have winding $0$ because they are asymptotic to different orbits. However, by gluing planes inside $W_0$ the images of $u_1$ and the $u_t$ can be included in cycles of degree $1$ in $\CC P^2$, which therefore have intersection number $1$. The added planes can be assumed to have a unique (positive) intersection point at the origin and so the intersections of $u_1$ and $u_t$ contribute $0$. This contradicts positivity of intersection.
%In the same basis, by definition the $u_t$ do have winding number $0$ about $u_0$. This means that $u_0$ and $u_1$ have different asymptotic windings about the $u_t$ and this gives a contradiction for $u_0$ and $u_1$ homologous and asymptotic to the same $\eta_0$.

For the second part of the proof we must exclude curves in ${\mathcal M}_0$ not lying in $Y$. As $J_0$ is $T^{n-2}$ invariant, any such curves appear in $(n-2)$-dimensional families and so are certainly not regular. Hence if we are able to assume that $J_0$ is regular for ${\mathcal M}_0$ and at the same time $T^{n-2}$ invariant then no such curves exist. The proof of the existence of regular invariant almost-complex structures follows the usual regularity argument working with invariant rather than general almost-complex structures. For this to work, instead of assuming that our holomorphic curves are somewhere injective we need the stronger assumption that corresponding to each curve in ${\mathcal M}_0$ there exists an orbit of the $T^{n-2}$ action which intersects the curve in a single point, see the proof of Proposition $3.16$ in \cite{hindker}. This is automatic in our case since by positivity of intersection a degree $1$ curve must intersect $\CC P^1(\infty) \times \RR^{2(n-2)}$ exactly once transversally, and hence intersect exactly one $T^{n-2}$ orbit in $\CC P^1(\infty) \times \RR^{2(n-2)}$, in a single point.

\qed

\begin{lemma} \label{pf2} For $J_1$ chosen as above, the moduli space ${\mathcal M}_1$ is empty.
\end{lemma}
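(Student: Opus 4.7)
My plan is to derive a contradiction by combining the area formula with a monotonicity lower bound. Suppose $u \in \mathcal{M}_1$; by Proposition \ref{area}, $\mathrm{area}(u) \approx R - (a_2+b)$ up to an error of order $\epsilon$.

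By compactness of $W_1$ and the strict inclusion $W_1 \subset \mathring{B}^4(a_2+b) \times \RR^{2(n-2)}$, I may assume $W_1 \subset \mathring{B}^4(S) \times \RR^{2(n-2)}$ for some $S < a_2+b$, with $J_1$ the standard product integrable complex structure on $(\CC P^2(R) \setminus B^4(S)) \times \RR^{2(n-2)}$. Set $U = u^{-1}((\CC P^2(R) \setminus B^4(S)) \times \RR^{2(n-2)}) \subset \CC$; this is relatively compact since $u$'s puncture maps into $B^4(S) \times \RR^{2(n-2)}$. On $U$ the projection $v = \pi_1 \circ u$ is a $J_{\mathrm{std}}$-holomorphic map into $\CC P^2(R) \setminus \overline{B}^4(S)$, has boundary on $\partial B^4(S)$, and has a transverse degree-one intersection with $\CC P^1(\infty)$ (inherited from $u$'s unique intersection point, which lies in the standard region). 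Using the product decomposition $\omega = \omega_{\CC P^2} + \omega_{\RR^{2(n-2)}}$ and the pointwise non-negativity of both pulled-back factors on $U$, I obtain
\[
\int_U v^* \omega_{\CC P^2} \;\leq\; \int_U u^* \omega \;\leq\; \mathrm{area}(u) \;\approx\; R - (a_2+b).
\]

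The heart of the argument is the matching monotonicity lower bound $\int_U v^* \omega_{\CC P^2} \geq R - S$, the direct analogue of the monotonicity used in the four-dimensional setting of \cite{hind}. It reflects the fact that $v$ is a $J_{\mathrm{std}}$-holomorphic curve in the K\"ahler manifold $\CC P^2(R) \setminus \overline{B}^4(S)$ passing transversely through a point $p \in \CC P^1(\infty)$, while the fiber of the disk bundle $\CC P^2(R) \setminus \overline{B}^4(S) \to \CC P^1(\infty)$ over $p$ has symplectic area $R - S$. Lelong/K\"ahler monotonicity (or, equivalently, a comparison via positivity of intersection with the complex line through $p$ and the center of $B^4(S)$) then forces $v$ to accumulate at least this much area near the intersection. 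Combining the two bounds yields $R - S \leq R - (a_2+b) + O(\epsilon)$, contradicting $S < a_2+b$ once $\epsilon$ is chosen small enough.

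The main obstacle I anticipate is the rigorous justification of the monotonicity step. Unlike the four-dimensional case treated in \cite{hind}, where a comparable bound applies directly to $u$, here the presence of the $\RR^{2(n-2)}$ factor forces us to work with the projection $v$ rather than $u$ itself, which is why we insist that $J_1$ be a genuine product integrable structure in the standard region (so that $\pi_1 \circ u|_U$ is actually $J_{\mathrm{std}}$-holomorphic). Once this setup is in place, the four-dimensional monotonicity applies to $v|_U$ and the rest of the argument reduces to the comparison of the two area bounds above.
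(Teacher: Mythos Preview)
Your proof is correct and follows essentially the same approach as the paper: restrict $u$ to the region $(\CC P^2(R)\setminus B^4(S))\times\RR^{2(n-2)}$, use the product integrable structure to project to a holomorphic curve in $\CC P^2(R)\setminus B^4(S)$, and invoke the monotonicity lemma from \cite{hind} to get the lower bound $R-S$ on the area, contradicting the upper bound $R-(a_2+b)$. Your write-up is more explicit about why the projection works and why the product decomposition gives the inequality $\int_U v^*\omega_{\CC P^2}\le \mathrm{area}(u)$, but the argument is the same.
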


\begin{proof} This is identical to the proof of Lemma $2.11$ in \cite{hind}. Indeed, the image of any curve in ${\mathcal M}_1$ can be restricted to a curve in $(\CC P^2(R) \setminus B(S)) \times \RR^{2(n-2)}$. As the complex structure is assumed to be a product the curve projects to a holomorphic curve in $\CC P^2(R) \setminus B(S)$, and then by a monotonicity theorem, see \cite{hind}, Lemma $2.12$, we see that it has area at least $R-S$. This is a contradiction as curves in any ${\mathcal M}_t$ have area $R-(a_2+b)$.
\end{proof}

Next we consider the universal moduli space $${\mathcal M} = \{(u,t)| u: \CC \to X_t, \mathrm{degree}(u)=1, \overline{\partial}_{J_t} u =0, u \sim \eta_t, t \in [0,1] \} \slash G.$$ This has virtual dimension $1$, but to show that it is a compact $1$-dimensional manifold (the source of our contradiction) we will need some assumptions on the family of almost-complex structures $J_t$.

First of all, since the curves in ${\mathcal M}$ have degree $1$ they are not multiply covered and so we may choose a family $J_t$ so that ${\mathcal M}$ is a $1$-dimensional manifold giving a cobordism between ${\mathcal M}_0$ and ${\mathcal M}_1$. The $J_t$ can be chosen to coincide with those we already have when $t=0,1$. Indeed, $J_0$ is regular by Lemma \ref{pf1}, and since no curves in ${\mathcal M}$ lie entirely in $(\CC P^2(R) \setminus B(S)) \times \RR^{2(n-2)}$ we are free to take $J_1$ standard here and perturb elsewhere to obtain regularity if necessary.

Second, a collection of families $\{J_t\}$ of the second category is regular in the sense that any somewhere injective $J_{t_0}$-holomorphic finite energy curve, for $t_0 \in [0,1]$, has deformation index at least $-1$ (amongst $J_{t_0}$ curves). We will also assume then that our $J_t$ are regular in this sense.

Finally, the cylindrical ends of the $X_t$ are all symplectomorphic, and after identifying them by a symplectomorphism we may assume that all $J_t$ are identical outside of a compact set. This implies that they induce identical translation invariant almost-complex structures on the symplectization $S(\partial W) = \RR \times \partial W$. Holomorphic curves in $S(\partial W)$ are either translation invariant, which means they are covers of cylinders over Reeb orbits, or come in families of dimension at least $1$. Therefore, if an almost-complex structure is regular, somewhere injective finite energy curves are either trivial cylinders or have deformation index at least $1$. As above such almost-complex structures form a subset of the second category and we will assume our $J_t$ induce a structure in this class.

The final lemma is the following, which contradicts Lemmas \ref{pf1} and \ref{pf2}.

\begin{lemma} \label{compact}
The universal moduli space ${\mathcal M}$ is sequentially compact.
\end{lemma}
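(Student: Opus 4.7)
The plan is to apply the SFT compactness theorem of \cite{BEHWZ} to an arbitrary sequence $(u_n, t_n) \in \mathcal{M}$, extract a subsequential limit, and then use the area and index computations of subsection \ref{2pt2} to rule out all degenerations. Passing to a subsequence so that $t_n \to t_0$ and $\eta_{t_n} \to \eta_{t_0}$, the uniform area bound $\mathrm{area}(u_n) \approx R - (a_2+b) < a_2$ lets me invoke \cite{BEHWZ} to obtain a holomorphic building $\mathbf{u}$ whose top level lies in $X_{t_0}$, whose intermediate levels lie in the symplectization $\RR \times \partial W_{t_0}$, and whose bottom end is asymptotic to $\eta_{t_0}$.

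The top level decomposes into components of which exactly one has degree $1$, by positivity of intersection with $\CC P^1(\infty) \times \RR^{2(n-2)}$, while the others must be constant ghost bubbles: by Proposition \ref{area} a nonconstant degree-$0$ top-level component would have strictly negative area, which is impossible. Denote the distinguished degree-$1$ component by $C$. Its area is at most $a_2$, and the generic regularity assumption on $\{J_t\}$ implies $\mathrm{index}(C) \ge -1$, so by Lemma \ref{con2} the component $C$ is a plane asymptotic to one of $\gamma^2_{1,1}$, $2\gamma^1$, or $2\gamma^2$.

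The heart of the argument is to exclude nontrivial intermediate levels, so that $\mathbf{u}$ consists of $C$ together only with covers of trivial cylinders. I would use the following index bookkeeping: the total Fredholm index of the broken configuration equals the expected dimension of the $1$-parameter moduli space $\mathcal{M}$, while the regularity assumption on $\{J_t\}$ and on the induced translation-invariant structure on $\RR \times \partial W$ forces every somewhere-injective, non-cylindrical symplectization component to have index at least $1$. Combining these constraints with the specific index computations of Proposition \ref{index} for planes asymptotic to each candidate orbit, and with the action-matching between consecutive levels, I rule out each possible nontrivial intermediate level in turn, following the four-dimensional template of \cite{hind}, section $2.3$. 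Since the bottom asymptote of $\mathbf{u}$ is $\eta_{t_0}$, which is of type $\gamma^2_{1,1}$, only the case $C \sim \gamma^2_{1,1}$ survives after orbit matching, and asymptotic matching then forces the top asymptote to be precisely $\eta_{t_0}$; hence $\mathbf{u}$ is a single smooth plane representing an element of $\mathcal{M}_{t_0}$.

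The principal obstacle will be the Morse-Bott degeneracy in the case $C \sim \gamma^2_{1,1}$: since $\eta_{t_0}$ sits in an $S^1$-family of orbits all of equal action, finite-energy gradient-flow connectors between distinct orbits in this family are a priori permitted by the naive SFT compactness statement and must be ruled out via the Bourgeois-style cascade formalism. This is the technical analogue of the argument in \cite{hind}, section $2.3$, and in higher dimension it requires the invariant-regularity results of \cite{hindker}. Verifying that the Fredholm index and action bookkeeping excludes nontrivial cascade connectors, without disturbing the regularity of the top component $C$ itself, is the delicate step on which the whole compactness statement rests.
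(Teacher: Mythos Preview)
Your overall architecture---SFT compactness, isolate a single degree-$1$ top component $C$, invoke Lemma \ref{con2}, then exclude symplectization levels---matches the paper. But you have inverted where the real difficulty lies, and in doing so you leave the actual hard step unaddressed.

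The $\gamma^2_{1,1}$ case, which you flag as the ``principal obstacle'', is in fact the easy one. All orbits in that Morse--Bott $S^1$-family have the \emph{same} action $\approx a_2+b$, so if $C$ is asymptotic to any $\gamma^2_{1,1}$ orbit its area already equals $R-(a_2+b)$, the full area of the $u_n$. Hence every symplectization component has zero area and is therefore a trivial cylinder; no cascade machinery or appeal to \cite{hindker} is needed (that reference is used in Lemma \ref{pf1} for invariant regularity, not here). This forces the asymptote to be $\eta_{t_0}$ itself. Likewise the case $C\sim 2\gamma^2$ is immediate: such a plane has area $R-2a_2 > R-(a_2+b)$ since $a_2<b$, violating the area bound, so it never arises.

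The genuine work is the case $C\sim 2\gamma^1$, which you dismiss as routine ``orbit matching''. Here $C$ has area $R-2b < R-(a_2+b)$, so there \emph{must} be nontrivial symplectization components carrying action $2b$ down to $a_2+b$. The paper first observes that for such a plane to exist one needs $R>2b$, whence $b<2a_2<a_j$ for $j\ge 3$. It then classifies the possible highest symplectization piece with positive end on $2\gamma^1$: if a cylinder, its negative end has action in $(a_2+b,2b)$, forcing it to be $\gamma^2_{1,1}$, $3\gamma^2$, or some $\gamma^j$ with $j\ge 3$; the latter two are excluded by index $\le -2$, while a cylinder to $\gamma^2_{1,1}$ has index $1$ and by translation invariance generically misses the specific orbit $\eta_{t_0}$ (and by area cannot connect to further levels). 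If instead the piece has several negative ends, one carries action $\ge a_2+b$ and the others carry action $<b-a_2<a_2$, but no Reeb orbit has action that small. This concrete action/index dichotomy is the content you need to supply; the general ``index bookkeeping'' you describe does not by itself produce it.
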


\begin{proof}
The general compactness theorem for finite energy curves can be found in \cite{BEHWZ}. In our situation, it implies that a sequence of finite energy curves $u_i$ representing classes in ${\mathcal M}_{t_i}$ with $t_i \to t_{\infty}$, after taking a subsequence, converge in the sense of \cite{BEHWZ} to a holomorphic building in $X_{t_{\infty}}$. For components in $X_{t_{\infty}}$ to be nonconstant they must have positive degree (see the comment after Proposition \ref{area}), and so since degree is preserved in the limit and the $u_i$ have degree $1$ our limit must consist of a single curve $u$ in $X_{t_{\infty}}$ of degree $1$. Therefore the curve is also somewhere injective. By regularity of the family of $J_t$ we have $\mathrm{index}(u) \ge -1$, and as the $u_i$ have area roughly $R-(a_2+b)$ the area of $u$ is bounded above by $R-(a_2+b)$. As $a_2<b$ by assumption, this excludes planes asymptotic to $2\gamma^2$ and hence by Lemma \ref{con2}, the curve $u$ is a finite energy plane asymptotic to either an orbit $\gamma^2_{1,1}$ or to $2\gamma^1$. In the first case, as the limit preserves area, it must be asymptotic to $\eta_{t_{\infty}}$ itself (as otherwise we would see symplectization components of positive area). Hence $(u,t_{\infty})$ represents a class in ${\mathcal M}$ and we have compactness as required.

It remains to exclude limiting planes asymptotic to $2\gamma^1$, which have index $0$ by Proposition \ref{index}. If a curve with such a limit exists then we have $R-2b>0$ and so $2b<R<2a_2+b$ and $b<2a_2<a_j$ for $j \ge 3$.

We look at components of the limit mapping to the symplectization layers $S(\partial W)$. There is a single curve in the highest level with positive end asymptotic to $2\gamma^1$. If this curve is a cylinder then the negative end is an asymptotic orbit with action between $a_2 + b$ and $2b$. Given the inequalities above, the only possibilities are negative ends on orbits of type $\gamma^2_{1,1}$ or $3 \gamma^2$ or $\gamma^j$ for some $j \ge 3$. In all three cases the greatest common divisor of the covering degrees of the positive and negative ends is $1$ and so the cylinder is somewhere injective. A variation of Proposition \ref{index} (or simply using the fact that the total index is preserved in a limit) shows that cylinders asymptotic to $3 \gamma^2$ or $\gamma^j$ for $j \ge 3$ have deformation index at most $-2$ and so we do not expect such cylinders to exist for regular almost-complex structures. Cylinders asymptotic to $\gamma^2_{1,1}$ have deformation index $1$, but by translation invariance we do not expect such a cylinder to have negative end on $\eta_{t_{\infty}}$. By area reasons, such a cylinder cannot be connected to any lower level curves, and so this possibility can also be excluded.

Finally, suppose the highest level curve in $S(\partial W)$ has several ends. As we take a limit of curves of genus $0$ exactly one of these ends is connected in our limiting building to $\eta_{t_{\infty}}$ and it has action at least $a_2 + b$. The remaining ends have action less than $2b - (a_2+b) = b - a_2 < a_2$ by the inequality above. Since no such periodic orbits exist we have a contradiction.

\end{proof}

\end{subsection}

\end{section}

\begin{section} {Isotopies of polydisks.} \label{polydiskcase}

Here we outline how the proof of Theorem \ref{polylike} can be adapted to prove Theorem \ref{polydisk}.

We argue by contradiction and suppose that there exists an isotopy $f_t:P(a_1, \dots ,a_n) \to B^4(R) \times \RR^{2(n-2)}$ with $f_0=f$ as in Theorem \ref{polylike} and $f_1(P(a_1, \dots ,a_n)) \subset \mathring{B}^4(a_1+a_3) \times \RR^{2(n-2)}$. Let $a=a'_1=a_1$ and $b=a'_3=a_3>2a$ and $a'_2=a'_4= \dots a'_n=a-\epsilon$, where $2\epsilon < 2a+b-R$. Then our isotopy restricts a polydisk $P=P(a'_1, \dots a'_n)$. We will show that no such isotopy of $P$ can exist.

The proof begins by smoothing $P$ to a domain $W$ in the same way as we perturbed a polylike domain in section \ref{2pt1}. However $\partial W$ will now contain more families of closed Reeb orbits. These can be described as follows. Let $m_1, \dots ,m_k$ be positive integers and $I$ a subset of $k$ distinct integers from $\{1, \dots ,n\}$. Then $\gamma^I_{m_1, \dots ,m_k}$ denotes a $(k-1)$-dimensional family of Reeb orbits approximating curves $[0,2\pi] \to \partial P$ given by  $$t \mapsto (\delta^I_1 a'_1 e^{i(\phi_1+t)}, \dots ,\delta^I_k a'_k e^{i(\phi_k+t)}).$$ Here $\delta^I_i=1$ if $i \in I$ and $0$ otherwise.

We may assume that our family of embeddings $f_t$ extend to $W$ and set $W_t=f_t(W)$. As in section \ref{2pt3}
we fix an orbit $\eta_t$ of type $\gamma^{1,3}_{1,1}$ in each $\partial W_t$. Then compactifying $\mathring{B}^4(R)$ to $\CC P^2(R)$ as in section \ref{2pt2}, we define $X_t = \CC P^2(R) \times \RR^{2(n-2)} \setminus W_t$, choose compatible almost-complex structures $J_t$, and study the corresponding moduli spaces
$${\mathcal M}_t = \{u: \CC \to X_t | \mathrm{degree}(u)=1, \overline{\partial}_{J_t} u =0, u \sim \eta_t \} \slash G.$$
Lemma \ref{dimension}, saying that ${\mathcal M}_t$ has dimension $0$, remains true in this setting, and Lemma \ref{pf1}, saying that ${\mathcal M}_0$ has a single element, also holds, with the same proof. As curves in ${\mathcal M}_1$ have area $R-(a'_1+a'_3)=R-(a_1+a_3)$, the monotonicity Lemma \ref{pf2} also holds here to say that ${\mathcal M}_1$ is empty. Hence our proof again boils down to showing that the universal moduli space $${\mathcal M} = \{(u,t)| u: \CC \to X_t, \mathrm{degree}(u)=1, \overline{\partial}_{J_t} u =0, u \sim \eta_t, t \in [0,1] \} \slash G.$$ is compact.

Limiting buildings whose components in $X_t$ have multiple ends can be excluded using area inequalities as in section \ref{polyproof}. Therefore, following Lemma \ref{compact}, we need to study $J_t$-holomorphic degree $1$ components $u: \CC \to X_t$ of a holomorphic building in the boundary of ${\mathcal M}$. Suppose $u$ is asymptotic to an orbit of type $\gamma^I_{m_1, \dots ,m_k}$. As area is preserved in limits and $a'_1+a'_3 = a+b$ we have $$a+b \le \sum_i m_i a'_i = am_1 + bm_3 + (a-\epsilon) \sum_{i \neq 1,3} m_i \le R.$$
As $u$ has degree $1$ it is somewhere injective and genericity assumptions here imply that $\sum_i m_i \le 3$. Then as $a+b>3a$ and $2(a-\epsilon) +b > R$ this implies that in fact $m_1=m_3=1$ and all other $m_i=0$. As in Lemma \ref{compact} this implies that $u \in  {\mathcal M}$ as required.

\end{section}

\begin{section} {Extension to ellipsoids.} \label{ellipsoids}

\begin{subsection} {The proof of Theorem \ref{extn}.}\label{3pt1}

In this section we prove Theorem \ref{extn}. For clarity we will restrict to part $(i)$, although as mentioned in the introduction the method is actually quite general.

Suppose then that $f_0,f_1:E \to B^4(R) \times \RR^{2(n-2)}$ are symplectic embeddings which restrict to embeddings of a polylike domain $Q$. Here $E=E(c_1, \dots ,c_n)$ is an ellipsoid, say with $c_1 \le \dots \le c_n$, and $Q=Q(b,a_2, \dots ,a_n)$ is a polylike domain for which $b,a_2, \dots ,a_n,R$ satisfy the hypotheses of Theorem \ref{extn} $(i)$.

We will use the following.

\begin{lemma} \label{ekho} If there exists a symplectic embedding $E \to B^4(R) \times \RR^{2(n-2)}$ then $\min (2c_1,c_2) \le R$.
\end{lemma}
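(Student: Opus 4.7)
The approach is to apply the second Ekeland--Hofer symplectic capacity $c_2^{\mathrm{EH}}$ (which accounts for the label \emph{ekho}), combined with its monotonicity under symplectic embeddings. Recall that for $E(c_1,\dots,c_n)$ with $c_1\le\cdots\le c_n$ the Ekeland--Hofer capacities are obtained by sorting the multiset $\{mc_j:m\in\mathbb N,\,1\le j\le n\}$, so $c_1^{\mathrm{EH}}(E)=c_1$ and $c_2^{\mathrm{EH}}(E)=\min(2c_1,c_2)$. The conclusion of the lemma is therefore the statement $c_2^{\mathrm{EH}}(E)\le R$, and by monotonicity this follows from the stabilized bound
$$c_2^{\mathrm{EH}}\bigl(B^4(R)\times\RR^{2(n-2)}\bigr)\le R.\qquad(\ast)$$

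I would first reduce to dimension four: the slice $E(c_1,c_2)=E\cap\{z_3=\cdots=z_n=0\}$ is a symplectic submanifold of $E$, so the given embedding restricts to a symplectic embedding $E(c_1,c_2)\hookrightarrow B^4(R)\times\RR^{2(n-2)}$, and since $c_2^{\mathrm{EH}}(E(c_1,c_2))=\min(2c_1,c_2)$ as well it suffices to prove the lemma for the four-dimensional ellipsoid. The estimate $(\ast)$ is the stabilized version of the classical equality $c_2^{\mathrm{EH}}(B^4(R))=R$, and I would prove it by the holomorphic curve methods developed in the rest of the paper. Specifically: compactify to $\CC P^2(R)\times\RR^{2(n-2)}$, take a smooth approximation $W$ of $\phi(E(c_1,c_2))$, and choose a $T^{n-2}$-invariant cylindrical almost complex structure on the complement as in Section \ref{2pt3}. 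Then study a moduli space of genus-zero degree-one holomorphic spheres constrained by marked points inside $\phi(E(c_1,c_2))$ at a common $\RR^{2(n-2)}$-height, and neck-stretch along $\partial W$. SFT compactness together with the area and index bookkeeping of Section \ref{polyproof} yields a limiting building whose outside components have total area at most $R$ and whose positive asymptotics are iterates of the two simple Reeb orbits $\gamma^1,\gamma^2$ on $\partial W$, of actions $c_1$ and $c_2$. Action balancing then forces either both asymptotics to be on $\gamma^1$ (so $2c_1\le R$) or at least one to be on $\gamma^2$ or a higher orbit (so $c_2\le R$), giving $\min(2c_1,c_2)\le R$.

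The main obstacle is the moduli-space set-up in the higher-dimensional target. A naive two-point count in $\CC P^2(R)\times\RR^{2(n-2)}$ has negative virtual dimension once $n\ge 3$, so one cannot directly copy the classical four-dimensional argument for $c_2^{\mathrm{EH}}(B^4(R))=R$; instead one must exploit the $T^{n-2}$-symmetry and a standard product structure near the divisor at infinity (as in the proof of Lemma \ref{pf1}) so that the relevant degree-one curves effectively live in a $\CC P^2(R)\times\{q\}$ slice and the classical two-point count applies. Once this is arranged, all of the compactness, transversality, and action/index accounting is of the same nature as in Section \ref{polyproof}, no essentially new analytical input is required, and the lemma then follows from $(\ast)$ together with the four-dimensional reduction.
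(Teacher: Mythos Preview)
Your first paragraph is exactly the paper's proof: compute the second Ekeland--Hofer capacity on both sides and invoke monotonicity. The paper simply quotes the value $c_2^{\mathrm{EH}}\bigl(B^4(R)\times\RR^{2(n-2)}\bigr)=R$ as a known fact from \cite{ekehof}; the Ekeland--Hofer capacities are defined variationally and satisfy the stabilization identity $c_k^{\mathrm{EH}}(U\times\RR^{2m})=c_k^{\mathrm{EH}}(U)$, so $(\ast)$ is already in the literature and no further argument is needed. Everything after your first paragraph is therefore superfluous for this lemma.

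Your proposed holomorphic-curve re-derivation of $(\ast)$ is not only unnecessary but has a real gap. The $T^{n-2}$-invariance used in Lemma~\ref{pf1} is available there only because the \emph{unperturbed} domain $W_0$ is itself $T^{n-2}$-invariant; for an arbitrary symplectic embedding $\phi:E(c_1,c_2)\hookrightarrow B^4(R)\times\RR^{2(n-2)}$ the image $\phi(E(c_1,c_2))$ has no reason to be invariant, so you cannot choose a cylindrical almost-complex structure that is simultaneously adapted to $\partial(\phi E)$ and $T^{n-2}$-invariant. Without that symmetry your dimension count for the two-point moduli problem does not close up, and the ``curves effectively live in a $\CC P^2(R)\times\{q\}$ slice'' step is unjustified. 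Your reduction to the four-dimensional ellipsoid on the domain side is fine, but it does nothing for the target, which is where the difficulty lies. In short: stop after your first paragraph and cite \cite{ekehof} for $(\ast)$, as the paper does.
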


\begin{proof} This is an application of the Ekeland-Hofer capacities, see \cite{ekehof}. Indeed, the second Ekeland-Hofer capacity of $E$ is $\min (2c_1,c_2)$, while the second Ekeland-Hofer capacity of $B^4(R) \times \RR^{2(n-2)}$ is $R$. As these capacities are monotonic under embeddings the proof follows.
\end{proof}

Corresponding to the embeddings $f_0|_Q$ and $f_1|_Q$, together with various choices including a smoothing of the image, a choice of asymptotic Reeb orbit $\eta$, and a compatible almost-complex structure, we can define moduli spaces ${\mathcal M}_0$ and ${\mathcal M}_1$ as in section \ref{2pt3}. Our goal is to show that ${\mathcal M}_0$ and ${\mathcal M}_1$ are cobordant.

We first observe that without the constraint of the image lying in $B^4(R) \times \RR^{2(n-2)}$, it is easy to construct a symplectic isotopy between $f_0$ and $f_1$. That is, there exists an $S \ge R$ and a family of symplectic embeddings $f_t:E \to B^4(S) \times \RR^{2(n-2)}$ interpolating between $f_0$ and $f_1$. These embeddings restrict to give an isotopy of $Q$.

As in section \ref{2pt2}, let $W_t$ be a smoothing of $f_t(Q)$ and $\{J_t\}$ be a family of compatible almost-complex structures on $X_t = \CC P^2(S) \times \RR^{2(n-2)} \setminus W_t$. Recall that $\CC P^2(S)$ denotes the compactification of the ball $\mathring{B}^4(S)$. Given this, following section \ref{2pt3}, we have a universal moduli space $${\mathcal M} = \{(u,t)| u: \CC \to X_t, \mathrm{degree}(u)=1, \overline{\partial}_{J_t} u =0, u \sim \eta_t, t \in [0,1] \} \slash G$$ whose boundary is the disjoint union of ${\mathcal M}_0$ and ${\mathcal M}_1$. To complete the proof we will show that for a suitable choice of $J_t$ the moduli space ${\mathcal M}$ is compact, and hence a cobordism between ${\mathcal M}_0$ and ${\mathcal M}_1$.

We choose a family $J^N_t$ exactly as in section \ref{2pt2} but with the additional condition that if $f_t(E) \not \subset \mathring{B}^4(R) \times \RR^{2(n-2)} \subset B^4(S) \times \RR^{2(n-2)}$ then $J^N_t$ is stretched to length $N$ along $f_t(\partial E)$. On the other hand, if  $f_t(E) \subset \mathring{B}^4(R) \times \RR^{2(n-2)}$ then we require $J^N_t$ to be the standard product integrable structure on $(\CC P^2(S) \setminus B^4(R)) \times \RR^{2(n-2)}$. As $f_0(E)$ and $f_1(E)$ lie in $B^4(R) \times \RR^{2(n-2)}$ this condition leaves us free to choose $J^N_0$ and $J^N_1$ as in section \ref{2pt3}. In particular, arguing by contradiction in the case of Theorem \ref{extn} $(i)$, we can choose $J_0$ such that ${\mathcal M}_0$ has a single element and $J_1$ such that ${\mathcal M}_1$ is empty.

We claim that for such $J^N_t$, with $N$ chosen sufficiently large, the moduli space ${\mathcal M}$ is compact. The line of argument follows that of Lemma \ref{compact}. Suppose that a sequence of curves $u_n \in {\mathcal M}$ converges to a holomorphic building, which will be $J^N_t$ holomorphic for some $t \in [0,1]$. As the $u_n$ all have degree $1$, our limiting building will have a single component $u$ in $X_t$ which is also of degree $1$ and so somewhere injective. It is required to show that $u$ represents an element of ${\mathcal M}$.

Now, the $u_n$ have area roughly $S-(a_2+b)$ and so the area of $u$ is bounded above by $S-(a_2+b)$ and the action of its negative ends is bounded above by $S$ and below by $a_2+b$. We can apply Lemma \ref{compact} once we show that in fact this action is bounded above by $R$, or equivalently that the area of $u$ is bounded below by $S-R$. Indeed, then the only possible limits are as described in Lemma \ref{compact}, see also Lemma \ref{con2}, and the proof follows.

We argue by contradiction and suppose that for all large $N$ we can find a $J^N_t$ holomorphic limiting component $u$ in $X_t$ with area less than $S-R$.

If the almost-complex structure $J^N_t$ is standard on $(\CC P^2(S) \setminus B^4(R)) \times \RR^{2(n-2)}$ then by the monotonicity theorem as in \cite{hind}, Lemma $2.12$ (see Lemma \ref{pf2} above), we can conclude that $u$ must have area at least $S-R$, a contradiction.

Hence, for all large $N$ we can find a degree $1$ curve $u^N$ in some $X_t$, with area less than $S-R$ and with respect to an almost-complex structure $J^N_t$ stretched to length $N$ along $f_t(\partial E)$. We will take a limit of these curves $u^N$ as $N \to \infty$ and see that the area of the limiting component $v$ in $X_t \setminus f_t(E)$ is at least $S - R$. This implies that the $u^N$, at least for large $N$, also have area at least $S - R$, giving our contradiction.

To investigate this limit we need to review the structure of the Reeb orbits on $\partial E$. Without loss of generality we may assume that the $c_i$ are rationally indepenent. Then there are $n$ geometrically disctinct closed orbits on $\partial E$, namely $\delta^k = \partial E \cap \{z_i=0, i \neq k\}$. The $r$ fold cover of $\delta^k$ has Conley-Zehnder index $$\mu(r \delta^k) = 2r + (n-1) + 2\sum_{j \neq k} \lfloor \frac{rc_k}{c_j} \rfloor.$$

Suppose that $v$ has a total of $s$ negative ends, with $s_k$ being covers of $\delta^k$ and the $i$th of these covering $\delta^k$ with multiplicity $r^k_i$. Then $v$ has virtual index given by the formula
\begin{equation*}
\begin{split}
\mathrm{index(v)} = (n-3)(2-s) + 6
- \sum_{k=1}^{n}\sum_{i=1}^{s_k}(2r^k_i+n-1+ 2\sum_{j \neq k} \lfloor \frac{rc_k}{c_j} \rfloor) \\
=(n-3)(2-2s) + 6 - 2s - 2\sum_{k=1}^{n}\sum_{i=1}^{s_k}(r^k_i + \sum_{j \neq k} \lfloor \frac{rc_k}{c_j} \rfloor).
\end{split}
\end{equation*}

As we work with a $1$-parameter family of almost-complex structures and $u$ is somewhere injective (as it has degree $1$), we may assume that this index is at least $-1$. This eliminates most of the possibilities for the ends of $v$. Indeed, we see that $v$ must have a single negative end, which is asymptotic to either $\delta^1$, the double cover $2\delta^1$, or $\delta^2$. Hence $v$ has area $S-c_1$, $S - 2c_1$ or $S-c_2$. Moreover, if the end is asymptotic to $2\delta^1$ then we must have $2c_1<c_2$, and by Lemma \ref{ekho} this means that $2c_1<R$. Similarly, if the end is asymptotic to $\delta^2$ then we must have $c_2<2c_1$, and hence Lemma \ref{ekho} implies $c_2<R$. In all cases then, the area of $v$ is bounded below by $S-R$ and our proof is compete.

\end{subsection}

\begin{subsection} {Some symplectic embeddings.}\label{3pt2}

In this section we give some examples of the general nonextension result described in Theorem \ref{extn}. Everything here is a fairly direct consequence of Theorem \ref{extn}, but we think there is some insight into the nature of symplectic embeddings.

We will work with a specific $6$-dimensional polylike domain $Q=Q(b,1,2)$ with $b>1$. Then by Theorem \ref{polylike} we have the following.

\begin{theorem}\label{thm31} Let $b+1<R<b+2$. Then the two embeddings $f_0,f_1:Q \to B^4(R) \times \RR^2$ given by $f_0(z_1,z_2,z_3)=(z_1,z_2,z_3)$ and $f_1(z_1,z_2,z_3)=(z_2,z_3,z_1)$ are not isotopic.
\end{theorem}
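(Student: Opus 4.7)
The plan is to argue by contradiction: suppose $f_0$ and $f_1$ are joined by a symplectic isotopy $F_t: Q \to B^4(R) \times \RR^2$. Since $Q$ is compact and $\bigcup_{t \in [0,1]} F_t(Q)$ is a compact subset of the ambient open manifold, the symplectic isotopy extension theorem furnishes a compactly supported Hamiltonian diffeomorphism $\phi$ of $B^4(R) \times \RR^2$ with $\phi \circ f_0 = f_1$, whose support lies in $B^4(R) \times \RR^2$.

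A direct computation shows that $f_1(Q) = \{\pi|w_3|^2 \le b,\ \pi|w_1|^2 + \pi|w_2|^2/2 \le 1\}$, and the projection to the $(w_1,w_2)$-plane satisfies $\pi|w_1|^2 + \pi|w_2|^2 \le 2$, the maximum being attained at the vertex $\pi|w_1|^2 = 0$, $\pi|w_2|^2 = 2$. Since $b > 1$, this gives $f_1(Q) \subset \mathring{B}^4(1+b) \times \RR^2$. Naively one would like to apply Theorem \ref{polylike} directly to $Q = Q(b,1,2)$ with $a_2 = 1$, $a_3 = 2$: the support of $\phi$ lies in $\mathring{B}^4(2+b) \times \RR^2 = \mathring{B}^4(2a_2+b) \times \RR^2$ because $R < b+2$, and $\phi(Q) = f_1(Q) \subset \mathring{B}^4(a_2+b) \times \RR^2$, which would contradict the theorem. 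The obstacle is that Theorem \ref{polylike} requires the strict inequality $a_j > 2a_2$, which just fails here as $a_3 = 2 = 2a_2$.

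To bypass this, choose $\delta > 0$ with $\delta < \min(b-1,\ (b+2-R)/2)$; both quantities are positive by the hypotheses $b > 1$ and $R < b+2$. Let $Q' := Q(b, 1-\delta, 2) \subset Q$. Now Theorem \ref{polylike} applies to $Q'$ with $a'_2 = 1-\delta$ and $a'_3 = 2$, since $a'_2 < b$ and $a'_3 = 2 > 2-2\delta = 2a'_2$. Repeating the computation above yields $f_1(Q') \subset B^4(2) \times \RR^2 \subset \mathring{B}^4((1-\delta) + b) \times \RR^2 = \mathring{B}^4(a'_2 + b) \times \RR^2$ using $\delta < b - 1$, while the support of $\phi$ lies in $B^4(R) \times \RR^2 \subset \mathring{B}^4(b + 2 - 2\delta) \times \RR^2 = \mathring{B}^4(2a'_2 + b) \times \RR^2$ using $\delta < (b+2-R)/2$. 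Restricting $\phi$ to $Q' \subset Q$ therefore contradicts Theorem \ref{polylike} applied to $Q'$.

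The principal issue in the argument is the borderline failure of the hypothesis $a_3 > 2a_2$ for the domain $Q(b,1,2)$, which forces the perturbation to $Q'= Q(b, 1-\delta,2)$; verifying that a single $\delta$ simultaneously keeps $f_1(Q')$ inside $\mathring{B}^4(a'_2 + b)$ and keeps the support of $\phi$ inside $\mathring{B}^4(2a'_2 + b)$ is the main numerical check. The invocation of symplectic isotopy extension is routine given the compactness of $Q$.
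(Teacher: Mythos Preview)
Your argument is correct and follows the same route as the paper, which simply declares Theorem~\ref{thm31} to be a consequence of Theorem~\ref{polylike}. You have, however, added a genuine detail that the paper omits: for $Q=Q(b,1,2)$ one has $a_3=2=2a_2$, so the strict hypothesis $a_j>2a_2$ of Theorem~\ref{polylike} is not literally satisfied. Your remedy---passing to the subdomain $Q'=Q(b,1-\delta,2)\subset Q$ and choosing $\delta<\min(b-1,(b+2-R)/2)$ so that both inclusions $f_1(Q')\subset\mathring{B}^4(a'_2+b)\times\RR^2$ and $\mathrm{supp}(\phi)\subset\mathring{B}^4(2a'_2+b)\times\RR^2$ hold---is clean and correct. (An equivalent patch, implicit in the paper's own proof of Theorem~\ref{polylike}, is to perturb the $a_j$ to be rationally independent; either way a small perturbation is needed.)

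One cosmetic point: to run symplectic isotopy extension cleanly you should work in the open product $\mathring{B}^4(R')\times\RR^2$ for some $R'\in(R,b+2)$, so that the compact images $F_t(Q)$ sit in an open symplectic manifold; then choose $\delta<(b+2-R')/2$. This does not affect the logic.
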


Next observe the following.

\begin{lemma}\label{outer} If $A>1$, $B=\frac{bA}{A-1}$ and $C=2A$, then $Q \subset E(B,A,C)$.
\end{lemma}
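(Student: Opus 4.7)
The plan is to verify the inclusion by direct computation using the defining inequalities of $Q$ and $E$. Writing $u=\pi|z_1|^2$, $v=\pi|z_2|^2$, $w=\pi|z_3|^2$, the hypothesis $(z_1,z_2,z_3)\in Q=Q(b,1,2)$ means
\begin{equation*}
u \le b, \qquad v + \tfrac{w}{2} \le 1,
\end{equation*}
and the goal is to show $(z_1,z_2,z_3)\in E(B,A,C)$, i.e.
\begin{equation*}
\frac{u}{B} + \frac{v}{A} + \frac{w}{C} \le 1.
\end{equation*}

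First I would substitute $B=\frac{bA}{A-1}$ and $C=2A$ into the left-hand side, which transforms the target inequality (after multiplying through by $A$) into
\begin{equation*}
\frac{u(A-1)}{b} + v + \frac{w}{2} \le A.
\end{equation*}
The two hypotheses then fit together perfectly: from $u\le b$ we get $\frac{u(A-1)}{b}\le A-1$ (using $A>1$ so that the coefficient $A-1$ is nonnegative and the division by $b>0$ preserves the inequality), while from the polylike constraint we have $v+\frac{w}{2}\le 1$. Adding these two estimates yields the required bound $A$, and hence the desired inequality.

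This is a one-line algebraic check rather than a substantive argument, so there is no real obstacle; the only thing worth flagging is that the hypothesis $A>1$ is used precisely to ensure $A-1\ge 0$ so that the monotonic step $u/b\le 1\Rightarrow u(A-1)/b\le A-1$ is valid, and that $b>0$ is implicit in the definition of $Q$. The choice $B=\frac{bA}{A-1}$ is exactly the value that makes the two slack terms $(A-1)$ and $1$ sum to $A$, i.e. the ellipsoid $E(B,A,C)$ is the tightest one of this form circumscribing $Q$ along the $z_1$-direction, which is why the bound is attained on the corner where $u=b$ and $v+w/2=1$ simultaneously.
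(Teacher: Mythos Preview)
Your proof is correct and is essentially the same direct computation as the paper's: the paper bounds $\frac{\pi|z_1|^2}{B}+\frac{\pi|z_2|^2}{A}+\frac{\pi|z_3|^2}{C}\le \frac{b}{B}+\frac{1}{A}\bigl(\pi|z_2|^2+\tfrac{\pi|z_3|^2}{2}\bigr)\le \frac{b}{B}+\frac{1}{A}=1$, which is your argument before multiplying through by $A$.
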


\begin{proof} If $(z_1,z_2,z_3) \in Q$ then we have
\begin{equation*}
\begin{split}
\frac{\pi|z_1|^2}{B} + \frac{\pi|z_2|^2}{A} + \frac{\pi|z_3|^2}{C} \le \frac{b}{B} + \frac{1}{A}(\pi|z_2|^2 + \frac{\pi|z_3|^2}{2}) \\
\le \frac{b}{B} + \frac{1}{A} \le 1.
\end{split}
\end{equation*}

\end{proof}

Now, with $A,B,C$ as in Lemma \ref{outer}, if $A,B<b+2$, or equivalently $\frac{b+2}{2} < A < b+2$, then by inclusion we have $E(B,A,C) \subset \mathring{B}^4(2+b) \times \RR^2$. Hence the map $f_0$ from Theorem \ref{thm31} extends (as the inclusion) to an embedding of $E$. Therefore we can apply Theorem \ref{extn} to say the following.

\begin{proposition} \label{prop32} Let $\frac{b+2}{2} < A < b+2$. The map $f_1:Q \to \mathring{B}^4(b+2) \times \RR^2$, $(z_1,z_2,z_3) \mapsto (z_2,z_3,z_1)$ does not extend to an embedding of $E$.
\end{proposition}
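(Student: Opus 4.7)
The plan is to deduce the proposition directly from Theorem \ref{extn}(i), applied to the polylike domain $Q = Q(b,1,2)$ and the ellipsoid $E = E(B,A,C)$ supplied by Lemma \ref{outer}. Arguing by contradiction, I would assume there exists a symplectic embedding $\tilde f_1 : E \to \mathring{B}^4(b+2) \times \RR^2$ with $\tilde f_1|_Q = f_1$, and pit it against the identity inclusion $\iota : E \hookrightarrow \mathring{B}^4(b+2) \times \RR^2$, which extends $f_0$.

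The first step is to verify that $\iota$ really has image in $\mathring{B}^4(b+2) \times \RR^2$: the inequality $\pi|z_1|^2/B + \pi|z_2|^2/A \le 1$ on $E$ gives $\pi|z_1|^2 + \pi|z_2|^2 \le \max(A,B)$, and the hypothesis $\frac{b+2}{2} < A < b+2$ forces both $A < b+2$ and $B = bA/(A-1) < b+2$. The second step is to observe that since $E$ is compact while $\mathring{B}^4(b+2) \times \RR^2$ is open, the images $\iota(E)$ and $\tilde f_1(E)$ are both contained in a common $B^4(R) \times \RR^2$ with $R < b+2 = 2a_2 + b$ (taking $a_2 = 1$). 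The third step is the image condition on $Q$: for $(z_1,z_2,z_3) \in Q$, the constraint $\pi|z_2|^2 + \pi|z_3|^2/2 \le 1$ yields $\pi|z_2|^2 + \pi|z_3|^2 \le 2 < b+1 = a_2 + b$ by the hypothesis $b > 1$, so $f_1(Q) \subset \mathring{B}^4(a_2+b) \times \RR^2$. This puts us in the setting of Theorem \ref{extn}(i), which precisely forbids the coexistence of $\iota$ and $\tilde f_1$.

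I expect the only real obstacle is that our $Q(b,1,2)$ has $a_3 = 2 = 2a_2$, whereas Theorem \ref{extn}(i) is stated under the strict hypothesis $a_j > 2a_2$. I would bypass this by a small perturbation: apply the theorem to the slightly smaller polylike $Q' = Q(b, 1-\eta, 2) \subset Q$ for a small $\eta > 0$. Then $a_3 = 2 > 2(1-\eta) = 2a_2'$ holds strictly, $\iota|_{Q'}$ is still the identity, $f_1(Q')$ still sits inside $\mathring{B}^4(b + 1 - \eta) \times \RR^2 = \mathring{B}^4(a_2'+b) \times \RR^2$ (since $b-1>\eta$), and the common bound $R < 2a_2' + b = b + 2 - 2\eta$ is achievable because the original $R$ was strictly less than $b+2$. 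Theorem \ref{extn}(i) applied to $Q' \subset E$ with the pair $(\iota, \tilde f_1)$ then produces the required contradiction.
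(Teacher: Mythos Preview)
Your proof is correct and follows exactly the paper's approach: the paper also deduces the proposition from Theorem \ref{extn}(i) by taking $f_0$ to be the inclusion of $E$ (which lands in $\mathring{B}^4(b+2)\times\RR^2$ precisely because $A,B<b+2$) and observing that $f_1(Q)\subset\mathring{B}^4(b+1)\times\RR^2$. In fact you are more careful than the paper, which applies Theorem \ref{extn}(i) directly to $Q(b,1,2)$ without commenting on the borderline $a_3=2a_2$; your perturbation to $Q'=Q(b,1-\eta,2)$ is exactly the right way to make this rigorous.
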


Proposition \ref{prop32} is certainly an extension theorem (rather than an embedding obstruction) as we know that an embedding of $E$ exists, namely the inclusion $f_0$. However, there is no embedding of $E$ of the form $(z_1,z_2,z_3) \mapsto (g_1(z_2,z_3),z_1)$, since there exists a map $g_1:E(A,2A) \to \mathring{B}^4(b+2)$ if and only if $A < \frac{b+2}{2}$. This is a consequence for example of the Ekeland-Hofer capacities, \cite{ekehof}. In other words, Proposition \ref{prop32} says nothing about extensions of embeddings in dimension $4$.

Nevertheless we can obtain a new extension result for $4$-dimensional embeddings, saying that the obstructions to the extension of an embedding to an ellipsoid can be partially localized. The following is a generalization of Proposition \ref{xtn} $(iii)$.

\begin{proposition}\label{prop33} Let $\frac{b+2}{2} < A < b+1$. The inclusion map $E(A,2A) \cap \{\pi|z_2|^2 =2 \} \to \mathring{B}^4(b+2)$ does not extend to a symplectic embedding $E(A,2A) \cap \{\pi|z_2|^2 \ge 2 \} \to \mathring{B}^4(b+2)$.
\end{proposition}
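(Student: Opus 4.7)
The plan is to derive a contradiction with Proposition \ref{prop32} by stabilizing a putative $4$-dimensional extension into a $6$-dimensional one. Suppose, aiming at a contradiction, that the inclusion of $E(A,2A) \cap \{\pi|z_2|^2 = 2\}$ into $\mathring{B}^4(b+2)$ extends to a symplectic embedding $g: E(A,2A) \cap \{\pi|z_2|^2 \ge 2\} \to \mathring{B}^4(b+2)$. First, I would observe that the hypothesis $A<b+1$ forces the complementary lower cap $E(A,2A) \cap \{\pi|z_2|^2 \le 2\}$ to sit inside $\mathring{B}^4(b+2)$: any $(z_1,z_2)$ in this set satisfies $\pi|z_1|^2 \le A - \pi|z_2|^2/2$, so $\pi|z_1|^2 + \pi|z_2|^2 \le A + \pi|z_2|^2/2 \le A+1 < b+2$. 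Hence the identity provides a symplectic embedding of the lower cap into $\mathring{B}^4(b+2)$. Piecing it together with $g$ along the interface $\{\pi|z_2|^2 = 2\}$, where the two maps coincide, I would obtain a symplectic embedding $\tilde g: E(A,2A) \to \mathring{B}^4(b+2)$ whose restriction to $E(1,2) \subset E(A,2A) \cap \{\pi|z_2|^2 \le 2\}$ is the identity.

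Next, I would stabilize by taking the product with the disk $\{\pi|z_1|^2 \le b\}$. Setting $B = bA/(A-1)$ and recalling from Lemma \ref{outer} that $Q(b,1,2) \subset E(B,A,2A)$, I would define
\[ F: E(B,A,2A) \to \mathring{B}^4(b+2) \times \RR^2, \quad F(z_1,z_2,z_3) = (\tilde g(z_2,z_3),\, z_1). \]
This is a product of symplectic maps (the identity on the $z_1$-factor and $\tilde g$ on the $(z_2,z_3)$-factor), hence symplectic, with image in $\mathring{B}^4(b+2) \times \RR^2$ because $\tilde g$ maps into $\mathring{B}^4(b+2)$. Since $Q(b,1,2) = \{\pi|z_1|^2 \le b\} \times E(1,2)$ and $\tilde g|_{E(1,2)} = \mathrm{id}$, the map $F$ restricts on $Q$ to $(z_1,z_2,z_3) \mapsto (z_2,z_3,z_1) = f_1(z_1,z_2,z_3)$. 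Thus $F$ is a symplectic extension of $f_1$ to $E(B,A,2A)$. Since $\tfrac{b+2}{2} < A < b+1 < b+2$, this contradicts Proposition \ref{prop32}, completing the argument.

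The main technical obstacle is the smoothness of $\tilde g$ across the interface $\{\pi|z_2|^2 = 2\}$: although $g$ and the identity agree there continuously, their normal derivatives need not match a priori. I would address this by a small Hamiltonian perturbation of $g$ supported in a collar of the slice, using the positive gap $b+1-A > 0$ to keep the perturbed map inside $\mathring{B}^4(b+2)$. The strict inequality $A < b+1$ in the hypothesis is precisely what provides this room, and this is the only place where $A < b+1$ (rather than the weaker $A < b+2$ of Proposition \ref{prop32}) enters; the geometric content of the statement is otherwise entirely captured by the reduction to the $6$-dimensional non-extension result.
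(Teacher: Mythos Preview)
Your reduction has a genuine gap, and it is not the smoothness issue you flag. The glued map $\tilde g: E(A,2A) \to \mathring{B}^4(b+2)$ need not be \emph{injective}: even if $g$ and the identity agree on the slice $\{\pi|z_2|^2 = 2\}$ (or on a collar neighbourhood of it), nothing prevents the image $g\bigl(E(A,2A)\cap\{\pi|z_2|^2 \ge 2\}\bigr)$ from overlapping the lower cap $E(A,2A)\cap\{\pi|z_2|^2 \le 2\}$ sitting by inclusion in $\mathring{B}^4(b+2)$. Your stabilization $F(z_1,z_2,z_3)=(\tilde g(z_2,z_3),z_1)$ inherits this failure of injectivity, so $F$ is not an embedding either and you cannot invoke Proposition~\ref{prop32}. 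Worse, a symplectic embedding $E(A,2A)\hookrightarrow \mathring{B}^4(b+2)$ is in fact \emph{impossible} under your hypothesis $A>\tfrac{b+2}{2}$: the second Ekeland--Hofer capacity of $E(A,2A)$ is $2A>b+2$, while that of $B^4(b+2)$ is $b+2$. So the intermediate object $\tilde g$ you are trying to build simply does not exist; no amount of Hamiltonian smoothing near the interface can fix this.

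The paper's proof circumvents exactly this difficulty by passing to six dimensions \emph{before} attempting to glue. One first applies to $f_1(E)=E(A,2A,B)$ a compactly supported Hamiltonian flow $h$ generated by $H(z_1,z_2,z_3)=Kx_3\,\chi(\pi|z_2|^2)$, where $\chi$ is a cut-off supported on $\{\pi|z_2|^2>2\}$. This pushes the portion of $f_1(E)$ with $\pi|z_2|^2>2+\epsilon$ far away in the $y_3$-direction while fixing $f_1(Q)$ pointwise. One then applies $g\times\mathrm{id}$ on the region $\{y_3\ge K-B\}$ and the identity on the rest; because both maps preserve $y_3$ and agree on the overlap (where $g=\mathrm{id}$ since $\pi|z_2|^2<2+\epsilon$ there), the resulting map is globally injective by construction. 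The extra real direction $y_3$ is precisely what provides the room to separate the two pieces and guarantee injectivity --- room that is unavailable in the purely four-dimensional gluing you attempt.
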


Before proving this, we remark that this is an extension result in the sense that embeddings of $E(A,2A) \cap \{\pi|z_2|^2 \ge 2 \}$ exist, at least for some $A$, as shown in the following.

\begin{lemma}\label{lastlem} If $A < \frac{b+3}{2}$ then there exists an embedding $E(A,2A) \cap \{\pi|z_2|^2 \ge 2 \} \to \mathring{B}^4(b+2)$.
\end{lemma}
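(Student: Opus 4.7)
The plan is to compose a symplectic shift in the $z_2$-direction with a direct ellipsoidal inclusion. On the open set $\{\pi|z_2|^2 > 2\} \subset \CC^2$, define $\Phi(z_1, z_2) = (z_1, z_2')$, where $z_2'$ has the same argument as $z_2$ and satisfies $\pi|z_2'|^2 = \pi|z_2|^2 - 2$. In the polar coordinates $(u, \theta) = (\pi|z_2|^2, \arg z_2)$ this is the translation $(u, \theta) \mapsto (u-2, \theta)$, which preserves $du \wedge d\theta$ and hence is symplectic. Substituting into the defining inequality of $E(A, 2A)$, one verifies that $\Phi$ maps the open interior of $X := E(A, 2A) \cap \{\pi|z_2|^2 \ge 2\}$ diffeomorphically onto $E(A-1, 2A-2) \setminus \{z_2' = 0\}$.

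The shift collapses the cut boundary $\{\pi|z_2|^2 = 2\} \cap X$ (a 3-dimensional solid torus) to the 2-disk $\{z_2' = 0, \pi|z_1|^2 \le A-1\}$, so $\Phi$ is not a smooth symplectic embedding of the closed set $X$. To remedy this, I would modify $\Phi$ in a thin collar of the cut boundary by a compactly supported Hamiltonian isotopy, thickening the collapsing image into a smoothly embedded 3-submanifold that sits inside the slightly larger ellipsoid $E(A-1, 2A-1)$. The extra $z_2'$-capacity $(2A-1) - (2A-2) = 1$ is precisely what accommodates this perturbation, yielding a smooth symplectic embedding $X \hookrightarrow E(A-1, 2A-1)$.

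Finally, $E(A-1, 2A-1) \subset B^4(2A-1)$ by direct inclusion (since $A-1 \le 2A-1$), and the hypothesis $A < (b+3)/2$ gives $2A - 1 < b+2$, so $B^4(2A-1) \subset \mathring{B}^4(b+2)$. Composing the three embeddings produces the desired map $X \to \mathring{B}^4(b+2)$.

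The main obstacle is the boundary-smoothing step: one must explicitly construct a symplectic perturbation of $\Phi$ near the cut boundary that is a smooth embedding of the closed set $X$, agrees with $\Phi$ away from the collar, and lands inside $E(A-1, 2A-1)$. This is a local problem near the cut torus, made tractable by the unit-size $z_2'$-buffer provided by using $E(A-1, 2A-1)$ in place of the tighter target $E(A-1, 2A-2)$.
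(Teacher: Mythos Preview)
Your overall strategy---shift the $z_2$-action to shrink the domain, then include directly in a ball---is sound and in fact more elementary than the paper's argument. The paper instead shows $X\subset E(\tilde A,4\tilde A)$ for suitable $\tilde A$ by a direct inclusion computation, and then invokes the nontrivial embedding $E(\tilde A,4\tilde A)\hookrightarrow B^4(2\tilde A)$ due to McDuff--Schlenk and Opshtein. Your route avoids that external input entirely.

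However, the smoothing step as you describe it does not work. Composing the collapsing map $\Phi$ with a Hamiltonian diffeomorphism, whether on the source or the target, cannot repair the failure of injectivity and smoothness at the cut boundary: a diffeomorphism composed with a non-embedding is still a non-embedding. And if instead you try to smoothly interpolate the action shift, replacing $R_2'=R_2-2$ by $R_2'=g(R_2)$ with $g$ nonlinear near $R_2=2$, the resulting map is not symplectic: in action--angle coordinates the form is $dR_2\wedge d\theta_2$, and this is preserved only by translations $g(R_2)=R_2+c$.

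The fix is simpler than what you sketch: shift by $2-\delta$ rather than by $2$. The map
\[
(R_1,\theta_1,R_2,\theta_2)\longmapsto (R_1,\theta_1,R_2-(2-\delta),\theta_2)
\]
is already a smooth symplectic embedding of the \emph{closed} set $X$, since $X\subset\{R_2\ge 2\}$ stays strictly away from the coordinate singularity at $R_2=2-\delta$. From $R_1\le A-R_2/2$ and $R_2\le 2A$ one gets $R_1+R_2\le 2A$, hence $R_1'+R_2'\le 2A-2+\delta$; the hypothesis $A<(b+3)/2$ then gives $2A-2+\delta<b+2$ for small $\delta$, so the image lies in $\mathring B^4(b+2)$. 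No intermediate ellipsoid target or collar perturbation is needed.
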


\begin{proof} Choose an $\tilde{A}$ with $A - \frac{1}{2} < \tilde{A} < \frac{b+2}{2}$. This is possible by our hypothesis on $A$. Then we have $E(A,2A) \cap \{\pi|z_2|^2 \ge 2 \} \subset E(\tilde{A}, 4\tilde{A})$. Indeed, if $(z_1,z_2) \in E(A,2A) \cap \{\pi|z_2|^2 \ge 2 \}$ then
\begin{equation*}
\begin{split}
\frac{\pi|z_1|^2}{\tilde{A}} + \frac{\pi|z_2|^2}{4\tilde{A}} =  \frac{A}{\tilde{A}}(\frac{\pi|z_1|^2}{A} + \frac{\pi|z_2|^2}{4A}) \\
= \frac{A}{\tilde{A}}(\frac{\pi|z_1|^2}{A} + \frac{\pi|z_2|^2}{2A} - \frac{\pi|z_2|^2}{4A}) \\
\le \frac{A}{\tilde{A}}(1-\frac{2}{4A}) = \frac{A-\frac{1}{2}}{\tilde{A}} <1.
\end{split}
\end{equation*}

Finally, there exists an embedding $E(\tilde{A}, 4\tilde{A}) \to B^4(2\tilde{A}) \subset B^4(b+2)$. This follows from the classification of ellipsoid embeddings into balls contained in \cite{mcdsch}, although this particular embedding was also known at least to Opshtein, \cite{opshtein} Lemma $2.1$.
\end{proof}

\begin{proof}( of Proposition \ref{prop33}.) Let $E=E(B,A,C)$ as above, that is $\frac{b+2}{2} < A < b+1$, $B=\frac{bA}{A-1}$ and $C=2A$. Again we let $Q=Q(b,1,2)$ be the polylike domain inside $E$, and look at the map $f_1$ as above.
We will argue by contradiction and show that if an extension $g$ exists as in Proposition \ref{prop33} then an extension of $f_1|_Q$ to a map $E \to \mathring{B}^4(b+2) \times \RR^2$ must also exist, contradicting Proposition \ref{prop32}.

We first note that $f_1(E \cap \{\pi|z_3|^2 \le 2 \}) \subset \mathring{B}^4(b+2) \times \RR^2$. This is because $f_1(z_1,z_2,z_3)=(z_2,z_3,z_1)$ and if $(z_1,z_2,z_3) \in E \cap \{\pi|z_3|^2 \le 2 \}$ then $$\pi|z_2|^2 + \pi|z_3|^2 = A(\frac{\pi|z_2|^2}{A} + \frac{\pi|z_3|^2}{2A}) +  \frac{\pi|z_3|^2}{2} \le A+1 < b+2.$$

Next we consider the action of a Hamiltonian diffeomorphism $h$ on the domain $f_1(E)=E(A,2A,B)$. The diffeomorphism will restrict to the identity on $f_1(Q)=E(1,2) \times B^2(b) \subset E(A,2A,B)$.

Let $\chi:[0,\infty) \to [0,\infty)$ be a cut-off function with $\chi(x)=0$ if $x<2$ and $\chi(x)=1$ if $x >2+\epsilon$, for a small $\epsilon$. Then for $K$ large we define the Hamiltonian function $$H(z_1,z_2,z_3)=Kx_3\chi(\pi|z_2|^2).$$ Here we are denoting by $x_3$ and $y_3$ the real and imaginary parts of $z_3$. The resulting time $1$ flow $h$ of the Hamiltonian vector field corresponding to $H$ leaves the region $\{\pi|z_2|^2 \le 2\}$, and in particular $f_1(Q)$, pointwise fixed. It also preserves $|z_1|$ and $|z_2|$ and the coordinate $x_3$, but the $y_3$ coordinate of points with $\pi|z_2|^2 >2$ is increased under $h$. Regarding level sets of $h(f_1(E))$ we can say the following.

\begin{enumerate}[I.]
\item if $d \ge B$ then $h(f_1(E)) \cap \{z_3=c+id\} \subset E(A,2A) \cap \{\pi|z_2|^2 >2\}$;
\item if $d \le K-B$ then $h(f_1(E)) \cap \{z_3=c+id\} \subset E(A,2A) \cap \{\pi|z_2|^2 <2+\epsilon\}$.
\end{enumerate}

For $\epsilon$ sufficiently small this second region lies in $\mathring{B}^4(b+2) \times \RR^2$ by the computation above.

We may assume that our extension $g$ is actually the inclusion on a narrow domain $E(A,2A) \cap \{2 \le \pi|z_2|^2 \le 2+\epsilon \}$. Then consider the map
$g \times \mathrm{id}:(z_1,z_2,z_3) \mapsto (g(z_1,z_2),z_3)$ restricted to the portion of $\CC^3$ with $y_3 \ge K-B$. By our hypothesis and point I. above, this maps $h(f_1(E)) \cap \{y_3 \ge K-B\} \to \mathring{B}^4(b+2) \times \CC$. But by point II. our map extends as the identity to the remainder of $h(f_1(E))$, and in particular is the identity on $h(f_1(Q))=f_1(Q)$. Thus we have an extension of the embedding $f_1$ of $Q$, and this contradicts Proposition \ref{prop32} as required.
\end{proof}

\end{subsection}

\end{section}

\end{document}